\documentclass{amsart}

\usepackage{helvet, color}

\usepackage{amscd,amsmath,amsxtra,amsthm,amssymb,stmaryrd,xr,mathrsfs,mathtools,enumerate}
\usepackage[all,cmtip]{xy}

 \DeclareFontFamily{U}{wncy}{}
    \DeclareFontShape{U}{wncy}{m}{n}{<->wncyr10}{}
    \DeclareSymbolFont{mcy}{U}{wncy}{m}{n}
    \DeclareMathSymbol{\Sh}{\mathord}{mcy}{"58}

\newtheorem{theorem}{Theorem}[section]
\newtheorem{lemma}[theorem]{Lemma}

\newtheorem{proposition}[theorem]{Proposition}
\newtheorem{corollary}[theorem]{Corollary}
\newtheorem{definition}[theorem]{Definition}

\numberwithin{equation}{section}

\theoremstyle{remark}
\newtheorem{remark}[theorem]{Remark}
\newtheorem{example}[theorem]{Example}

\newcommand{\Gal}{\operatorname{Gal}}

\newcommand{\cor}{\operatorname{cor}}

\newcommand{\fE}{\mathfrak{E}}
\newcommand{\fA}{\mathfrak{A}}

\newcommand{\Zp}{\mathbb{Z}_p}

\newcommand{\Z}{\mathbb{Z}}

\newcommand{\p}{\mathfrak{p}}
\newcommand{\Q}{\mathbb{Q}}

\newcommand{\D}{\mathfrak{D}}

\newcommand{\cL}{\mathcal{L}}

\newcommand{\cO}{\mathcal{O}}

\newcommand{\image}{\mathrm{Im}}

\newcommand{\Hom}{\mathrm{Hom}}
\newcommand{\Sel}{\mathrm{Sel}}
\newcommand{\Char}{\mathrm{char}}

\newcommand{\Gr}{\mathrm{Gr}}

\newcommand{\rank}{\mathrm{rank}}

\newcommand{\ur}{\mathrm{ur}}
\newcommand{\N}{\mathbb{N}}

\newcommand{\X}{\mathcal X}
\newcommand{\tor}{\mathrm{tor}}

\newcommand{\A}{\mathcal{A}}
\newcommand{\Hc}{\mathcal{H}}

\begin{document}

\title[Anticyclotomic Selmer groups of positive coranks -- II]{Comparing anticyclotomic Selmer groups of positive coranks for congruent modular forms -- Part II}

\author[J.~Hatley]{Jeffrey Hatley}
\address[Hatley]{
Department of Mathematics\\
Union College\\
Bailey Hall 202\\
Schenectady, NY 12308\\
USA}
\email{hatleyj@union.edu}

\author[A.~Lei]{Antonio Lei}
\address[Lei]{D\'epartement de math\'ematiques et de statistique\\
Pavillon Alexandre-Vachon\\
Universit\'e Laval\\
Qu\'ebec, QC, Canada G1V 0A6}
\email{antonio.lei@mat.ulaval.ca}

\begin{abstract}
We study the Selmer group associated to a $p$-ordinary newform $f \in S_{2r}(\Gamma_0(N))$ over the anticyclotomic $\mathbb{Z}_p$-extension of an imaginary quadratic field $K/\mathbb{Q}$. Under certain assumptions, we prove that this Selmer group has no proper $\Lambda$-submodules of finite index. This generalizes work of Bertolini in the elliptic curve case. We also offer both a correction and an improvement to an earlier result on Iwasawa invariants of congruent modular forms  by the present authors.
\end{abstract}

\thanks{The second named author's research is supported by the NSERC Discovery Grants Program RGPIN-2020-04259 and RGPAS-2020-00096.}

\subjclass[2010]{11R18, 11F11, 11F33, 11R23 (primary); 11F85  (secondary).}
\keywords{Anticyclotomic extensions, Selmer groups, modular forms, congruences.}

\maketitle

\section{Introduction}\label{section:intro}

Let $E/\Q$ be an elliptic curve, $K$ a number field, and $p$ a prime. A fruitful way to study the arithmetic of $E$ and its associated $p$-adic Galois representation is to define a Selmer group $\Sel(K_\infty,E)$ associated to $E$ over a $\Z_p$-extension $K_\infty / K$. This Selmer group has the structure of a $\Lambda$-module, where $\Lambda \simeq \Z_p \llbracket X \rrbracket$ is the Iwasawa algebra of the Galois group $\Gal(K_\infty/K)\cong\Zp$.

For many applications, it is useful to know that $\Sel(K_\infty, E)$ has no proper $\Lambda$-submodules of finite index. When $\Sel(K_\infty,E)$ is a cotorsion $\Lambda$-module, this can be proved in a wide variety of contexts; see for instance \cite[$\S$7]{Gr89} and \cite[Theorem 7.4]{kidwell16}. On the other hand, when $\Sel(K_\infty, E)$ is not cotorsion, such as when $K/\Q$ is imaginary quadratic satisfying the Heegner hypothesis and $K_\infty/K$ is the anticyclotomic $\Z_p$-extension, one must use different techniques. One of these techniques was originally developed by Bertolini \cite{Bertolini-Bordeaux} in the case when $E$ has good ordinary reduction at $p$, and it has been recently extended to the supersingular setting by Vigni and the present authors \cite{HLV}.

In \cite{HL-MRL}, the present authors studied the anticyclotomic Iwasawa theory for Selmer groups associated to $p$-ordinary modular forms of even weight. Various auxiliary Selmer groups $\Sel_\mathcal{L}(K_\infty, f)$ were introduced, which were shown to have no proper $\Lambda$-submodules of finite index. By studying the relationship between $\Sel(K_\infty, f)$ and the auxiliary groups $\Sel_\mathcal{L}(K_\infty, f)$, results on the variation of Iwasawa invariants were obtained in the tradition of Greenberg--Vatsal \cite{greenbergvatsal}, Emerton--Pollack--Weston \cite{epw}, and Weston \cite{WestonManuscripta}.

We now summarize the main contributions of this paper:

\begin{itemize}
\item We generalize Bertolini's techniques and results on the non-existence of proper $\Lambda$-submodules of finite index in the Selmer group over $K_\infty$ from the setting of elliptic curves to the setting of modular forms. See Theorem \ref{thm:main} for the statement of our result.

\item We correct an inaccuracy of a result in \cite{HL-MRL} on the structure of finitely generated $\kappa\llbracket X \rrbracket$-modules, where $\kappa$ is a finite field of characteristic $p$. This result was used to compare $\lambda$-invariants of congruent $\Lambda$-modules in op. cit. This inaccuracy was pointed out to us by Meng Fai Lim. See $\S$\ref{sec:structure} and especially Example \ref{example:mistake} and Remark \ref{rmk:error-explanation}.

\item We correct and greatly simplify  the formulae for the $\lambda$-invariants of congruent modular forms given in \cite[Theorems~5.5 and 5.8]{HL-MRL}. Our proof is direct and does not rely on any auxiliary Selmer groups. Most notably, our improved result has no error terms coming from Euler factors or from the cokernel of the localization map. Furthermore, we may remove the condition on $\mu$-invariants by making use of a result of Hsieh \cite{Hsieh-Doc}. See Theorem \ref{thm:variation-of-iwasawa-invariants} for the statement.
\end{itemize}

As we discuss again in Section \ref{sec:nonexistence}, once the appropriate objects have been defined, the techniques used in the present paper and in \cite{HLV} frequently boil down to formal algebraic arguments which are due to Bertolini \cite{Bertolini-Bordeaux}. That is, Bertolini's arguments are valid in broad generality once the appropriate algebraic objects have been defined. In \cite{HLV}, some of Bertolini's arguments have been expanded upon in order to highlight their dependence (or lack thereof) on the various algebraic objects which appear.

\subsection*{Acknowledgements}
We thank Meng Fai Lim for pointing out an inaccuracy in \cite{HL-MRL}, as well as for useful comments and suggestions on an earlier version of this paper. We also thank Ashay Burungale, Ming-Lun Hsieh, Keenan Kidwell and Stefano Vigni for interesting discussions on topics related to this paper. We are grateful to the anonymous referee for offering many useful suggestions which have improved the final version of the paper.

\section{Notation and Assumptions}\label{sec:notation}
Throughout this paper, $p$ denotes a fixed odd prime. Section \ref{sec:structure} is self-contained with its own set of notation. In the rest of the paper, we will consider modular forms $f \in S_k(\Gamma_0(N))$, where $N>3$ is squarefree, $p\nmid N$, and $k \geq 2$ is an even integer. Let $K$ be an imaginary quadratic extension of $\Q$ with discriminant coprime to $Np$ in which $p=\p \bar{\p}$ and the prime divisors of $N$ split. We fix embeddings $K \hookrightarrow \mathbb{C}$ and $\overline{\Q} \hookrightarrow \overline{\Q}_p$.

Let $\mathfrak{F}=\Q_p(\{a_n(f)\})$ denote the finite extension of $\Q_p$ (inside of $\mathbb{C}_p$) generated by the Fourier coefficients of $f$, and let $\cO$ denote its valuation ring. We fix a uniformizer $\varpi$ of $\cO$ and denote its residue field by $\kappa$. When studying two modular forms $f$ and $g$, we will enlarge $\cO$ as necessary so that it contains the coefficients of both  modular forms.

In order to apply results of Bertolini \cite{Bertolini-Compositio}, Longo--Vigni \cite{LongoVigni}, and Chida--Hsieh \cite{ChidaHsieh} we will assume that the triple $(f,K,p)$ is \textit{admissible} in the sense that:
\begin{itemize}
  \item $p$ does not ramify in $\mathfrak{F}$
  \item the $p$-th Fourier coefficient $a_p(f)$ is a unit in $\cO$
  \item $p \nmid 6N(k-2)!\phi(N)h_K$
  \item if $k=2$ then $a_p(f)^2 \not\equiv 1 \mod p$.
\end{itemize}
(Here $\phi$ denotes the Euler totient function and $h_K$ denotes the class number of $K$.)
We will also assume that the residual $p$-adic representation associated to $f$,
\[
\bar{{\rho}}_f \colon G_\Q \to \mathrm{GL}_2(\kappa),
\]
is absolutely irreducible.

 Denote by $K_\infty$ the anticyclotomic $\Z_p$-extension of $K$, and for $n\ge0$ we write $K_n$ for the subextension of $K_\infty$ such that $K_n/K$ is of degree $p^n$. Let $\Lambda$ denote the Iwasawa algebra $\cO[[\Gamma]]$, where $\Gamma=\mathrm{Gal}(K_\infty /K) \simeq \Z_p$. For each $n \geq 1$, write $\Gamma_n=\mathrm{Gal}(K_\infty / K_n)$ and $G_n=\mathrm{Gal}(K_n/K).$

\section{Structure of modules over $\Omega$}\label{sec:structure}
We study the structure of modules over $\Omega$, which will allow us to compare Iwasawa invariants of congruent $\Lambda$-modules. Note that in this section, we are not using the fact that our Iwasawa algebra $\Lambda$ comes from the anticyclotomic $\Zp$-extension of an imaginary quadratic field. All results presented in this section still hold if we replace $\Lambda$ by any Iwasawa algebra that is isomorphic to the power series ring $\cO\llbracket X \rrbracket$.

Given a finitely generated $\Lambda$-module $M$, we define $M_\tor$ to be the maximal torsion submodule of $M$. We recall from \cite[\S1.8 and \S3.1]{Jan} that there is an exact sequence of $\Lambda$-modules
\begin{equation}
   0\rightarrow M_\tor\rightarrow M\rightarrow M^{++}\rightarrow T_2(M)\rightarrow 0,
\label{eq:jan}
\end{equation}
where $M^{++}$ is the reflexive hull of $M$, which is free over $\Lambda$ and $T_2(M)$ is finite.

Recall that $\kappa=\cO/(\varpi)$ denotes the residue field of $\cO$. We write $\Omega$ for the Iwasawa algebra $\kappa[[\Gamma]]=\Lambda/(\varpi)$ over $\kappa$. Since $\kappa$ is a field, it follows that $\Omega$ is a principal ideal domain. In particular, given a finitely generated $\Omega$-module $N$, there is an isomorphism of $\Omega$-modules
\[
N\cong \Omega^{\oplus r}\oplus \bigoplus_{j=1}^t\Omega/(F_j),
\]
where $F_j$ are polynomials in $\Omega$. We define the characteristic ideal of $N$ by
\[
\Char_\Omega(N)=\left(\prod_{j=1}^tF_j\right)\Omega.
\]
We also define $$\lambda(N)=\sum_{j=1}^t\deg(F_j).$$
We note that
\[
|N_\tor|=\left| \bigoplus_{j=1}^t\Omega/(F_j)\right|=|\kappa|^{\lambda(N)}.
\]

\begin{lemma}\label{lem:Omega}
Let $0\rightarrow A\rightarrow B\rightarrow C\rightarrow0$ be a short exact sequence of finitely generated $\Omega$-modules. Suppose that $A$ is torsion over $\Omega$, then
\begin{itemize}
    \item[(i)] $\rank_\Omega B=\rank_\Omega C$;
    \item[(ii)] $\Char_\Omega (B)=\Char_\Omega(A)\Char_\Omega(C)$.
\end{itemize}
\end{lemma}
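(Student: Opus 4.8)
The plan is to reduce everything to the fact that $\Omega=\kappa\llbracket X\rrbracket$ is a one-dimensional principal ideal domain, so that the two assertions become, respectively, the exactness of localization at the generic point and the additivity of length at the closed points. For part (i), I would tensor the given short exact sequence with the fraction field $\Frac(\Omega)$. Since $A$ is torsion over $\Omega$, we have $A\otimes_\Omega\Frac(\Omega)=0$, and exactness of localization yields an isomorphism of $\Frac(\Omega)$-vector spaces $B\otimes_\Omega\Frac(\Omega)\cong C\otimes_\Omega\Frac(\Omega)$. As $\rank_\Omega M=\dim_{\Frac(\Omega)}\bigl(M\otimes_\Omega\Frac(\Omega)\bigr)$ for every finitely generated $\Omega$-module $M$, this immediately gives $\rank_\Omega B=\rank_\Omega C$.

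For part (ii) I would first replace the sequence by a short exact sequence of finitely generated \emph{torsion} $\Omega$-modules. Since $A$ is torsion, its image in $B$ lies in $B_\tor$, so we may view $A$ as a submodule of $B_\tor$. The quotient $B_\tor/A$ is a torsion submodule of $B/A\cong C$, hence $B_\tor/A\subseteq C_\tor$; conversely $C/(B_\tor/A)\cong B/B_\tor$ is finitely generated and torsion-free, hence free over the PID $\Omega$, which forces $C_\tor\subseteq B_\tor/A$. Therefore $B_\tor/A=C_\tor$ and we obtain a short exact sequence
\[
0\longrightarrow A\longrightarrow B_\tor\longrightarrow C_\tor\longrightarrow 0
\]
of finitely generated torsion $\Omega$-modules. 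Because $\Char_\Omega$ depends only on the torsion submodule (indeed $\Char_\Omega(B)=\Char_\Omega(B_\tor)$ and $\Char_\Omega(C)=\Char_\Omega(C_\tor)$ directly from the definition), part (ii) follows once $\Char_\Omega$ is shown to be multiplicative on short exact sequences of finitely generated torsion $\Omega$-modules.

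To prove that multiplicativity I would argue at each maximal ideal of $\Omega$. Every such ideal is of the form $\mathfrak{q}=(F)$ with $F\in\Omega$ irreducible, and $\Omega_\mathfrak{q}$ is a discrete valuation ring. For a finitely generated torsion module $M\cong\bigoplus_j\Omega/(F_j)$ one has $M_\mathfrak{q}=0$ for all but finitely many $\mathfrak{q}$, and a direct computation shows
\[
\len_{\Omega_\mathfrak{q}}\bigl(M_\mathfrak{q}\bigr)=v_\mathfrak{q}\Bigl(\prod_j F_j\Bigr),
\qquad\text{hence}\qquad
\Char_\Omega(M)=\prod_{\mathfrak{q}}\mathfrak{q}^{\,\len_{\Omega_\mathfrak{q}}(M_\mathfrak{q})},
\]
where $v_\mathfrak{q}$ is the normalized valuation on $\Omega_\mathfrak{q}$ and the product runs over the maximal ideals of $\Omega$. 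Localizing the sequence $0\to A\to B_\tor\to C_\tor\to 0$ at a fixed $\mathfrak{q}$ gives a short exact sequence of finite-length $\Omega_\mathfrak{q}$-modules, and additivity of length yields $\len_{\Omega_\mathfrak{q}}((B_\tor)_\mathfrak{q})=\len_{\Omega_\mathfrak{q}}(A_\mathfrak{q})+\len_{\Omega_\mathfrak{q}}((C_\tor)_\mathfrak{q})$ for every $\mathfrak{q}$. Taking the product over all $\mathfrak{q}$ gives $\Char_\Omega(B_\tor)=\Char_\Omega(A)\,\Char_\Omega(C_\tor)$, which is exactly (ii).

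I do not expect any serious obstacle here: the whole argument is formal structure theory over a PID. The only point that requires a little care is the torsion-submodule bookkeeping that produces the short exact sequence $0\to A\to B_\tor\to C_\tor\to 0$ (in particular the identification $B_\tor/A=C_\tor$, which uses that $B/B_\tor$ is torsion-free); once this reduction is in place the local length computation is routine.
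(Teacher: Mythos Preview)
Your argument is correct. The paper does not give a self-contained proof here; it simply refers to \cite[Proposition~2.1]{HL-MRL}, where the analogous statement for $\Lambda$-modules is proved by the same structure-theoretic reasoning you use (localize at the generic point for ranks, use additivity of length on the torsion part for characteristic ideals). One minor simplification you could make: since $\Omega=\kappa\llbracket X\rrbracket$ is already a discrete valuation ring, there is only the single maximal ideal $(X)$, so your ``product over all $\mathfrak{q}$'' collapses to a single length computation, and the identity $\Char_\Omega(M)=(X)^{\dim_\kappa M}$ for torsion $M$ makes the multiplicativity immediate from additivity of $\kappa$-dimension in exact sequences.
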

\begin{proof}
This follows from the same proof as \cite[Proposition~2.1]{HL-MRL}.
\end{proof}

\begin{proposition}\label{prop:correction}
Let $M$ be a finitely generated $\Lambda$-module such that $\mu(M)=0$. Then,
\[
\rank_\Omega(M/\varpi)=\rank_\Lambda M.
\]
Furthermore,
\[
\Char_\Omega(M/\varpi)=\Char_\Omega\left(M_\tor/\varpi\right)\Char_\Omega\left( T_2(M)[\varpi]\right).
\]
\end{proposition}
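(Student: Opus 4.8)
The strategy is to exploit the fundamental exact sequence \eqref{eq:jan} together with the hypothesis $\mu(M)=0$, and to reduce everything modulo $\varpi$. First, I would recall that $\mu(M)=0$ is equivalent to saying that $M_\tor$ has no subquotient isomorphic to $\Lambda/(\varpi)$, equivalently that $M_\tor[\varpi]$ is finite, equivalently that $M_\tor/\varpi$ is finite; in particular $M_\tor/\varpi$ is a torsion $\Omega$-module. Similarly $T_2(M)$ is finite by \eqref{eq:jan}, so $T_2(M)/\varpi$ and $T_2(M)[\varpi]$ are both finite, hence torsion over $\Omega$. The free module $M^{++}\cong\Lambda^{\oplus r}$ with $r=\rank_\Lambda M$ reduces to $M^{++}/\varpi\cong\Omega^{\oplus r}$, which is free of rank $r$ over $\Omega$.

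The plan is then to split \eqref{eq:jan} into two short exact sequences,
\[
0\to M_\tor\to M\to M/M_\tor\to 0,\qquad 0\to M/M_\tor\to M^{++}\to T_2(M)\to 0,
\]
and apply the snake lemma (tensoring with $\Lambda/\varpi$) to each. From the second sequence, since $T_2(M)$ is finite, $\Tor_1^\Lambda(T_2(M),\Lambda/\varpi)=T_2(M)[\varpi]$ and we get an exact sequence of $\Omega$-modules
\[
0\to T_2(M)[\varpi]\to (M/M_\tor)/\varpi\to M^{++}/\varpi\to T_2(M)/\varpi\to 0.
\]
Since the two outer terms and $T_2(M)/\varpi$ are finite (hence torsion over $\Omega$) and $M^{++}/\varpi\cong\Omega^{\oplus r}$, Lemma~\ref{lem:Omega}(i) gives $\rank_\Omega\big((M/M_\tor)/\varpi\big)=r$, and Lemma~\ref{lem:Omega}(ii) (applied twice, after breaking the four-term sequence into two short exact sequences via the image of the middle map) yields $\Char_\Omega\big((M/M_\tor)/\varpi\big)=\Char_\Omega\big(T_2(M)[\varpi]\big)\cdot\Char_\Omega\big(M^{++}/\varpi\big)/\Char_\Omega\big(T_2(M)/\varpi\big)$. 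But $M^{++}/\varpi$ is free, so $\Char_\Omega(M^{++}/\varpi)=(1)$; and for a finite $\Omega$-module $T$ one checks $\Char_\Omega(T[\varpi])$ and $\Char_\Omega(T/\varpi)$ coincide (indeed for $T=T_2(M)$ a finite $\Lambda$-module, $T[\varpi]$ and $T/\varpi$ have the same cardinality, and since $\lambda$ determines cardinality over $\Omega$ one gets the same $\lambda$; a touch more care identifies the ideals themselves, but for the final formula only the $\lambda$-contribution matters—alternatively one works directly with $\lambda$ throughout). Hence $\Char_\Omega\big((M/M_\tor)/\varpi\big)=\Char_\Omega\big(T_2(M)[\varpi]\big)$.

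Next I would feed this into the snake lemma for the first short exact sequence. Since $M_\tor/\varpi$ is finite, $\Tor_1^\Lambda(M/M_\tor,\Lambda/\varpi)=(M/M_\tor)[\varpi]$ maps into $M_\tor/\varpi$, producing
\[
0\to (M/M_\tor)[\varpi]\to M_\tor/\varpi\to M/\varpi\to (M/M_\tor)/\varpi\to 0.
\]
Break this into two short exact sequences through the image $I$ of $M_\tor/\varpi\to M/\varpi$. The kernel term $(M/M_\tor)[\varpi]$ is a submodule of the finite module $M^{++}[\varpi]=0$... more carefully, $(M/M_\tor)$ embeds in $M^{++}$ which is torsion-free, so $(M/M_\tor)[\varpi]=0$; thus $M_\tor/\varpi\hookrightarrow M/\varpi$ and $I=M_\tor/\varpi$. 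Then $0\to M_\tor/\varpi\to M/\varpi\to (M/M_\tor)/\varpi\to 0$ with $M_\tor/\varpi$ torsion over $\Omega$. Lemma~\ref{lem:Omega}(i) gives $\rank_\Omega(M/\varpi)=\rank_\Omega\big((M/M_\tor)/\varpi\big)=r=\rank_\Lambda M$, proving the first assertion. Lemma~\ref{lem:Omega}(ii) gives $\Char_\Omega(M/\varpi)=\Char_\Omega(M_\tor/\varpi)\cdot\Char_\Omega\big((M/M_\tor)/\varpi\big)=\Char_\Omega(M_\tor/\varpi)\cdot\Char_\Omega\big(T_2(M)[\varpi]\big)$, which is exactly the claimed formula.

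The main obstacle I anticipate is bookkeeping with $\Tor$ and the precise identification of the connecting maps in the snake lemma—in particular verifying $(M/M_\tor)[\varpi]=0$ (which rests on $M^{++}$ being $\Lambda$-free, hence $\varpi$-torsion-free) and checking that the image $I$ in the second snake sequence is all of $M_\tor/\varpi$. A secondary subtlety is the step $\Char_\Omega(T_2(M)[\varpi])=\Char_\Omega(T_2(M)/\varpi)$: one should either prove this equality of ideals directly (using that $T_2(M)$ is a finite $\cO$-module, so $T_2(M)[\varpi]$ and $T_2(M)/\varpi$ are abstractly isomorphic $\kappa$-vector spaces, and as $\Omega$-modules supported at the origin their characteristic ideals are both $(\gamma-1)^d$ for $d=\dim_\kappa$), or simply phrase the whole argument in terms of $\lambda$-invariants and invoke $|T_2(M)[\varpi]|=|T_2(M)/\varpi|$ to cancel the $T_2(M)$-contributions in the Euler characteristic, which is all that is needed for the displayed formula. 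Everything else is a routine application of Lemma~\ref{lem:Omega} to short exact sequences obtained by reduction mod $\varpi$.
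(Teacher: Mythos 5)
Your overall strategy coincides with the paper's: split off the torsion via $0\to M_\tor\to M\to M/M_\tor\to 0$, note $(M/M_\tor)[\varpi]=0$ so that $0\to M_\tor/\varpi\to M/\varpi\to (M/M_\tor)/\varpi\to 0$ is exact, apply~\eqref{eq:jan} to $M/M_\tor$, reduce mod $\varpi$, and feed everything through Lemma~\ref{lem:Omega}. However, there is a genuine flaw in the step that computes $\Char_\Omega(I)$ for $I=\image\theta$.

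When you break the four-term sequence $0\to T_2(M)[\varpi]\to (M/M_\tor)/\varpi\xrightarrow{\theta} M^{++}/\varpi\to T_2(M)/\varpi\to 0$ into two short exact sequences through $I$, the second piece is $0\to I\to M^{++}/\varpi\to T_2(M)/\varpi\to 0$. Lemma~\ref{lem:Omega}(ii) \emph{cannot} be applied to this sequence, because $I$ has $\Omega$-rank $r$ and is therefore not torsion; the lemma's hypothesis fails. Indeed, multiplicativity of $\Char_\Omega$ breaks in exactly this situation: for $0\to \Omega\xrightarrow{X}\Omega\to\kappa\to 0$ one has $\Char_\Omega(\Omega)=(1)$ on both sides while $\Char_\Omega(\kappa)=(X)$. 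Your ``division'' formula $\Char_\Omega(I)=\Char_\Omega(M^{++}/\varpi)/\Char_\Omega(T_2(M)/\varpi)$ is therefore unjustified, and in fact it is internally inconsistent: substituting $\Char_\Omega(M^{++}/\varpi)=(1)$ and $\Char_\Omega(T_2(M)[\varpi])=\Char_\Omega(T_2(M)/\varpi)$ into the displayed formula gives $\Char_\Omega\big((M/M_\tor)/\varpi\big)=(1)$, not the stated $\Char_\Omega(T_2(M)[\varpi])$. The same objection applies to the fallback ``work with $\lambda$ throughout,'' since the Euler-characteristic identity for $\lambda$ also fails for four-term sequences in which the interior terms have positive rank.

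The missing idea, which the paper uses, is structural rather than arithmetic: $I$ is a submodule of the free $\Omega$-module $\Omega^{\oplus r}$, and since $\Omega$ is a PID, $I$ is itself free; comparing ranks gives $I\cong\Omega^{\oplus r}$, so $\Char_\Omega(I)=(1)$ directly. One then only needs Lemma~\ref{lem:Omega}(ii) for the single short exact sequence $0\to T_2(M)[\varpi]\to (M/M_\tor)/\varpi\to I\to 0$ (whose first term \emph{is} torsion), yielding $\Char_\Omega\big((M/M_\tor)/\varpi\big)=\Char_\Omega\big(T_2(M)[\varpi]\big)$. The rest of your proof, including the identification $(M/M_\tor)[\varpi]=0$ and the final assembly via the first short exact sequence, is correct.
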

\begin{proof} Let us write $M'$ for the $\Lambda$-torsion-free quotient $M/M_\tor$.
We begin by considering the following tautological short exact sequence:
\[
0\rightarrow M_\tor\rightarrow M\rightarrow M'\rightarrow0.
\]
Since $M'$ is $\Lambda$-torsion-free, we have $M'[\varpi]=0$. This gives the short exact sequence
\[
0\rightarrow M_\tor/\varpi\rightarrow M/\varpi\rightarrow M'/\varpi\rightarrow0.
\]
Our hypothesis that $\mu(M)=0$ implies that $M_\tor$ is finitely generated over $\cO$. Hence, $M_\tor/\varpi$ is a torsion $\Omega$-module. Therefore, Lemma~\ref{lem:Omega} implies that
\begin{align}
\label{eq:ranks}\rank_\Omega M/\varpi&=\rank_\Omega M'/\varpi;\\
    \label{eq:Omega-tor-free}
    \Char_\Omega(M/\varpi)&=\Char_\Omega(M_\tor/\varpi)\Char_\Omega(M'/\varpi).
\end{align}

Let $r=\rank_\Lambda M$. Consider the exact sequence \eqref{eq:jan} applied to the $\Lambda$-module $M'$. We have
\[
0\rightarrow M'\rightarrow \Lambda^{\oplus r}\rightarrow T_2(M) \rightarrow 0
\]
since $M'_\tor=0$ and $T_2(M)=T_2(M')$. As $\Lambda[\varpi]=0$, we deduce the following exact sequence
\begin{equation}
    0\rightarrow T_2(M)[\varpi]\rightarrow  M'/\varpi\stackrel{\theta}{\longrightarrow} \Omega^{\oplus r}\rightarrow T_2(M) /\varpi\rightarrow 0.
\label{eq:M'jan}
\end{equation}
Recall that $T_2(M)$ is finite. This tells us that $\rank_\Omega M'/\varpi=\rank_\Omega\image\theta=r$. In particular, if we combine this with \eqref{eq:ranks}, we see that $\rank_\Omega M/\varpi=r$. Furthermore, $\Omega^{\oplus r}$ is $\Omega$-torsion-free. Thus, $\image\theta\cong\Omega^{\oplus r}$ as $\Omega$-modules. Therefore, we deduce from \eqref{eq:M'jan} a short exact sequence
\[
 0\rightarrow T_2(M)[\varpi]\rightarrow  M'/\varpi\stackrel{\theta}{\longrightarrow} \Omega^{\oplus r}\rightarrow 0.
\]
 Lemma~\ref{lem:Omega} now tells us that
 \[
 \Char_\Omega M'/\varpi=\Char_\Omega T_2(M)[\varpi].
 \]
On combining this with \eqref{eq:Omega-tor-free}, our result follows.
 \end{proof}

 Proposition~\ref{prop:correction} leads us to introduce the following definition.
 \begin{definition}\label{def:error-term}
 Let $M$ be a finitely generated module over $\Lambda$. We define  $ c(M)$ to be the unique integer satisfying the equation
 \[
|T_2(M)[\varpi]|=|\kappa|^{c(M)}.
 \]
 \end{definition}

\begin{corollary}\label{cor:lambda-error-equality}
Suppose that $A$ and $B$ are two finitely generated $\Lambda$-modules such that
\begin{itemize}
    \item[(i)]  $A/\varpi\cong B/\varpi$ as $\Omega$-modules;
    \item[(ii)] $\mu(A)=\mu(B)=0$;
    \item[(iii)] Both $A$ and $B$ admit no non-trivial finite $\Lambda$-submodules.
\end{itemize}
Then, we have
\begin{itemize}
    \item[(a)]$\rank_\Lambda A=\rank_\Lambda B$;
    \item[(b)]$\lambda(A)+c(A)=\lambda(B)+c(B)$.
\end{itemize}
\end{corollary}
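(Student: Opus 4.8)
The plan is to push the problem down to the level of modules over the principal ideal domain $\Omega$, where hypothesis~(i) can be applied directly, using Proposition~\ref{prop:correction} as the bridge between $\Lambda$-module invariants and $\Omega$-module invariants. Part~(a) then follows immediately: since $\mu(A)=\mu(B)=0$, Proposition~\ref{prop:correction} gives $\rank_\Lambda A=\rank_\Omega(A/\varpi)$ and $\rank_\Lambda B=\rank_\Omega(B/\varpi)$, while by~(i) the $\Omega$-modules $A/\varpi$ and $B/\varpi$ are isomorphic and hence have equal $\Omega$-rank.

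For part~(b), the one genuinely nonformal input is the following claim: \emph{if $M$ is a finitely generated $\Lambda$-module with $\mu(M)=0$ admitting no nonzero finite $\Lambda$-submodule, then $M_\tor/\varpi\cong\kappa^{\lambda(M)}$.} To prove it, recall that $\mu(M)=0$ forces $M_\tor$ to be finitely generated over the discrete valuation ring $\cO$; hence, as an $\cO$-module, $M_\tor\cong\cO^{\lambda(M)}\oplus T$ with $T$ finite, where $\lambda(M)=\rank_\cO M_\tor$ is the usual $\lambda$-invariant of $M$ (the $\cO$-rank is unchanged by the pseudo-isomorphism of $M_\tor$ with a finite direct sum of modules $\Lambda/(g)$ for distinguished polynomials $g$). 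But then $T$ is a finite $\Lambda$-submodule of $M$, so $T=0$ by hypothesis, which yields the claim. This is precisely where condition~(iii) enters, and it is the crux of the argument: without~(iii), $M_\tor$ could carry $\cO$-torsion contributing to $|M_\tor/\varpi|$ but not to $\lambda(M)$.

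Granting the claim, part~(b) is a short computation. Since $A$ satisfies~(ii) and~(iii), Proposition~\ref{prop:correction} gives
\[
\Char_\Omega(A/\varpi)=\Char_\Omega(A_\tor/\varpi)\,\Char_\Omega(T_2(A)[\varpi]),
\]
and taking degrees of the generating polynomials on both sides yields $\lambda(A/\varpi)=\lambda(A_\tor/\varpi)+\lambda(T_2(A)[\varpi])$. The claim gives $A_\tor/\varpi\cong\kappa^{\lambda(A)}$, hence $\lambda(A_\tor/\varpi)=\lambda(A)$, while $T_2(A)[\varpi]$ is finite, so $\lambda(T_2(A)[\varpi])=c(A)$ by Definition~\ref{def:error-term} together with the relation $|N|=|\kappa|^{\lambda(N)}$. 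Therefore $\lambda(A/\varpi)=\lambda(A)+c(A)$, and likewise $\lambda(B/\varpi)=\lambda(B)+c(B)$. Since hypothesis~(i) forces $\lambda(A/\varpi)=\lambda(B/\varpi)$, we conclude $\lambda(A)+c(A)=\lambda(B)+c(B)$.

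The main, and really the only, obstacle is establishing $M_\tor/\varpi\cong\kappa^{\lambda(M)}$, which rests on using~(iii) to exclude $\cO$-torsion in $M_\tor$. That~(iii) genuinely cannot be omitted is illustrated by taking $A=\cO\oplus\cO/\varpi$ and $B=\cO^2$ with trivial $\Gamma$-action: then $A/\varpi\cong\kappa^2\cong B/\varpi$, both have vanishing $\mu$-invariant, the module $B$ has no nonzero finite $\Lambda$-submodule whereas $A$ does, and $\lambda(A)+c(A)=1\neq 2=\lambda(B)+c(B)$.
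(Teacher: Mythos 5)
Your proposal is correct and follows essentially the same route as the paper: you apply Proposition~\ref{prop:correction} to both $A$ and $B$, then use hypotheses (ii) and (iii) to identify $M_\tor$ with $\cO^{\lambda(M)}$ as an $\cO$-module so that $M_\tor/\varpi\cong\kappa^{\lambda(M)}$, and finally read off the equality from $\Char_\Omega(A/\varpi)=\Char_\Omega(B/\varpi)$. The only difference is that you spell out in more detail why (iii) kills the $\cO$-torsion in $M_\tor$, and you add a counterexample showing (iii) is necessary — both of which are fine but not new ideas beyond the paper's argument.
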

\begin{proof}
Our hypothesis (ii) allows us to apply Proposition~\ref{prop:correction} to both $A$ and $B$. Thus, the hypothesis (i) tells us that
\[
\rank_\Lambda A=\rank_\Omega A/\varpi=\rank_\Omega B/\varpi=\rank_\Lambda B
\]
and that
\begin{align*}
    \Char_\Omega\left(A_\tor/\varpi\right)\Char_\Omega&\left( T_2(A)[\varpi]\right)=\Char_\Omega(A/\varpi)\\
    &=\Char_\Omega(B/\varpi)=\Char_\Omega\left(B_\tor/\varpi\right)\Char_\Omega\left( T_2(B)[\varpi]\right).
\end{align*}

But $M_\tor\cong \cO^{\lambda(M)}$ as $\cO$-modules for both $M=A$ and $B$ thanks to  (ii) and (iii). Consequently,
\[
M_\tor/\varpi\cong \kappa^{\lambda(M)}
\]
as  $\kappa$-vector spaces. Hence, our result follows.
\end{proof}

\begin{example}\label{example:mistake}
We thank Meng Fai Lim for bringing to our attention the following example. Let $A=\Lambda\oplus \cO$ and $B$ the maximal ideal of $\Lambda$. Note that neither $A$ nor $B$ admits non-trivial finite $\Lambda$-submodules and that $\mu(A)=\mu(B)=0$. Furthermore, consider the short exact sequence
\[
0\rightarrow B\rightarrow \Lambda\rightarrow\kappa\rightarrow0.
\]
Since $\Lambda[\varpi]=0$, we obtain the following exact sequence:
\[
0\rightarrow\kappa\rightarrow B/\varpi\stackrel{\theta}{\longrightarrow} \Omega\rightarrow\kappa\rightarrow0.
\]
Since both the kernel and cokernel of $\theta$ are isomorphic to $\kappa$, which is of rank $0$ over $\Omega$, we see that the image of $\theta$ is free of rank one over $\Omega$. This then gives the $\Omega$-isomorphism
\[
B/\varpi\cong \Omega\oplus\kappa\cong A/\varpi.
\]
In particular, the three hypotheses in Corollary~\ref{cor:lambda-error-equality} hold.

It is clear that $\rank_\Lambda A=\rank_\Lambda B=1$, confirming property (a) of the corollary. We have $T_2(A)=0$ and $T_2(B)=\kappa$. Thus, $c(A)=0$ and $c(B)=1$. It is not hard to see that $\lambda(A)=1$ and $\lambda(B)=0$.  Thus,
\[
\lambda(A)+c(A)=\lambda(B)+c(B)=1,
\]
confirming property (b). From this example, we see that  $\lambda(A)$ and $\lambda(B)$ can be different and that the terms $c(A)$ and $c(B)$ are indispensable in (b).
\end{example}

\begin{remark}\label{rmk:error-explanation}
The preceding results correct some inaccuracies in \cite[Lemmas 2.8 and 2.9]{HL-MRL} which were brought to our attention by Meng Fai Lim.

\begin{itemize}

\item In Lemma 2.8, the condition  $(C/\varpi)^{\Gamma_n} =0$ is equivalent to saying that $C=0$. To see this, note that $C$ is finite, and so it is both a discrete and compact module. By the $p$-group fixed point theorem, we have $(C/\varpi)^{\Gamma_n} =0$ if and only if $C/\varpi=0$. The latter holds if and only if $C=0$ by Nakayama's lemma, which is valid since C is compact.

\item Lemma 2.9 will not hold if $P$ is finite. Since $\Gamma_n$ will act trivially on $P(i)$ for $n \gg 0$, the conclusion can never be attained. The problem in the proof lies in second-to-last line: each summand in the direct sum {\it a priori} maps to $P(i)$ rather than being a submodule, but upon taking their sum, these maps may introduce a kernel due to the presence of finite modules.
\end{itemize}
Thus, the results from this section of the present paper should be used in lieu of those mentioned above.
\end{remark}

\section{Comparing $\lambda$-invariants of congruent modular forms}\label{sec:comparing-invariants}
Recall from Section \ref{sec:notation} that $f\in S_k(\Gamma_0(n))$ denotes a modular form of even weight $k=2r$ for which the triple $(f,K,p)$ is admissible.

Denote by $\rho_f$ the associated $G_\Q$-representation constructed in \cite{nekovar1992}; it is realized by a free ${\mathcal O}$-module of rank $2$ which we denote by $T$, and $V=T \otimes \mathfrak{F}$ is the $r$-th Tate twist of the Galois representation constructed by Deligne. Finally, let $A = V/T$. Recall that we assume $T / \varpi T$ is irreducible, hence $T$ is unique up to scaling. Our choice of normalization makes both $V$ and $T$ self-dual.

For a rational prime $\ell$, write $G_\ell$ for the decomposition group at $\ell$. As described in \cite[$\S$1.2]{ChidaHsieh}, the representation $\rho_f$ has the following local properties:
\begin{equation}\label{eq:local-at-p}\rho_f \vert_{G_p} \sim \left(
\begin{matrix}
\chi_p^{-1}\epsilon^{r} & \ast \\
0 & \chi_p \epsilon^{1-r}
\end{matrix}
\right)\end{equation}
where $\chi_p$ is unramified and $\epsilon$ is the $p$-cyclotomic character, and
for $\ell \mid N$, \begin{equation}\label{eq:local-at-ell}\rho_f \vert_{G_\ell} \sim \left(
\begin{matrix}
\pm \epsilon & \ast \\
0 & \pm 1
\end{matrix}
\right)\end{equation}
where $\ast$ is ramified. This can be deduced via the local Langlands correspondence; see for instance \cite[$\S$3.3]{WestonUnob}. Note that we are using the assumption that $N$ is squarefree.

For every integer $m \geq 1$, let $A_m=A[\varpi^m]$ denote the $\varpi^m$-torsion submodule of $A$. There is a canonical isomorphism between $A_m$ and $T_m=T/\varpi^mT$. We will sometimes find it convenient to denote $A_\infty = A$.

In this section, we first recall the various definitions of Selmer groups over anticyclotomic extensions following \cite{Gr89}, and then we  present corrected (and improved) versions of our results in \cite[\S5]{HL-MRL}.

\subsection{Anticyclotomic Selmer Groups}\label{sec:Selmer-definitions}

Since $f$ is $p$-ordinary and since we are working with the $r$-th Tate twist of the associated Galois representation, there exists a unique $G_{\Q_p}$-invariant line $\mathcal{F}^+V \subset V$ on which inertia $I_{\Q_p}$ acts by $\epsilon^r$, where $\epsilon$ is the $p$-cyclotomic character. Let $\mathcal{F}^+ A$ be the image of $\mathcal{F}^+V$ under the natural projection map $V \rightarrow A$, and for each $m \geq 1$ set $\mathcal{F}^+ A_m = \mathcal{F}^+ A \cap A_m$. We define $\mathcal{F}^+ T_m$ in a similar way. Then for $W \in \{ A_m, T_m\}$, we denote by $\mathcal{F}^-W=W/\mathcal{F}^+W$ the unramified quotient.

Let $E$ be any finite extension of $K$ and let $m \in \mathbf{N} \cup \{\infty\}$. For any place $v$ of $E$, define the ordinary local condition
\[
H^1_f (E_v, A_m) = \begin{cases} \mathrm{ker}\left( H^1(E_v,A_m) \rightarrow {H^1(E_v^{\ur},A_m)} \right), & v \nmid p, \\
\mathrm{ker}\left( H^1(E_v,A_m) \rightarrow H^1(E_v,\mathcal{F}^-A_m) \right), & v \mid p,
\end{cases}
\]
where $E_v^{\mathrm{un}}$ denotes the maximal unramified extension of $E_v$.

Recall that $p=\mathfrak{p} \bar{\mathfrak{p}}$ splits in $K$. Following \cite[Definition 2.2]{Castella}, for $v \mid p$ and $\mathcal L_v \in \{ \emptyset, \Gr, 0 \}$, set
\[
H^1_{\mathcal L_v}(E_v,A_m) = \begin{cases}
H^1(E_v,A_m) & \text{if}\ \mathcal L_v = \emptyset, \\
H^1_f(E_v,A_m) & \text{if}\ \mathcal L_v = \Gr, \\
\{0\} & \text{if}\ \mathcal L_v = 0.
\end{cases}
\]

If $\Sigma$ is a finite set of places, we denote by $E_\Sigma$ the maximal extension of $E$ unramified outside the places above $\Sigma$, and we write $H^i_\Sigma(E,*)$ for the Galois cohomology $H^i({E_\Sigma/E},*)$.

\begin{definition}
Let $m \in \mathbf{N} \cup \{\infty\}$. Then for any finite extension $E$ of $K$, a set $\mathcal L = \{ \mathcal L_v \}_{v \mid p}$ of local conditions at $p$, and for any finite set $\Sigma$ of places of $E$ containing those which divide $Np\infty$, we define the Selmer group
\[
\Sel_\mathcal{L}(E,A_m)= \ker \left( H^1_\Sigma(E,A_m) \rightarrow \prod_{\substack{v \in \Sigma\\ v \nmid p}} \frac{H^1(E_v,A_m)}{H^1_f(E_v,A_m)} \times \prod_{v \mid p} \frac{H^1(E_v,A_m)}{H^1_{\mathcal{L}_v}(E_v,A_m)}\right).
\]

We then define
\begin{align*}
\Sel_\mathcal{L}(K_\infty, A_m) &= \varinjlim_{n} \Sel_\mathcal{L}(K_n,A_m),
\end{align*}
where the limit is taken with respect to the natural restriction map.

Whenever $\mathcal{L}=\{\Gr,\Gr\}$, we omit the subscript and just write $\Sel(K_n,A_m)$.
\end{definition}


\begin{remark}\label{rmk:nonsurj} It is very often helpful to show that the global-to-local cohomology map which defines a Selmer group is surjective; for instance, this is the problem which is studied by Greenberg in \cite{Greenberg11}, where this surjectivity is used to deduce the non-existence of finite-index $\Lambda$-submodules. However, the global-to-local map defining our $\Sel(K_\infty,A)$ is \textit{not} surjective; nevertheless, we deduce that it has no finite-index submodules in Section \ref{sec:nonexistence}.  \end{remark}

Let us set some notation for the rest of the paper. For any $1\leq n \leq \infty$ we take $\Sigma_n$ to be the set of places of $K_n$ above the rational primes dividing $Np \infty$, and we simply write $\Sigma$ instead of $\Sigma_\infty$.

 We note that the set $\Sigma$ is finite since we have assumed that every prime dividing $Np$ splits in $K$, and since $K_\infty/K$ is anticyclotomic, no rational prime splits completely in $K_\infty$ \cite[Corollary 1]{Brink}.

Recall that $A_m=A[\varpi^m]$ and that $A$ is a divisible $\cO$-module.

\begin{lemma}\label{lem:pre-control}
Let $m, n \in \mathbb{N} \cup \{\infty\}$. We have isomorphisms
\begin{align}
H^1(K,A_m) &\simeq H^1(K_n,A_m)^{G_n}, \quad \text{and}\label{eq:isom1}\\
H^1(K,A_m) &\simeq H^1(K,A)[\varpi^m].\label{eq:isom2}
\end{align}
\end{lemma}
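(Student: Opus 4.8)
The plan is to prove each isomorphism separately using standard inflation-restriction and the structure of $A$ as a divisible $\cO$-module. For \eqref{eq:isom1}, I would apply the inflation-restriction exact sequence to the tower $K \subset K_n \subset K_{n,\Sigma}$ (or work with the full Galois group $G_K$): we have
\[
0 \to H^1(G_n, A_m^{G_{K_n}}) \to H^1(K, A_m) \to H^1(K_n, A_m)^{G_n} \to H^2(G_n, A_m^{G_{K_n}}).
\]
The key input is that $A_m^{G_{K_n}} = 0$, which follows from the running assumption that the residual representation $\bar\rho_f$ is absolutely irreducible: since $\bar\rho_f = A[\varpi]$ as a $G_\Q$-module has no nonzero $G_K$-fixed vectors (here one uses that $K/\Q$ is abelian so $\bar\rho_f|_{G_K}$ is still irreducible, or at least has no fixed line — this needs $\bar\rho_f|_{G_K}$ nontrivial, which holds because $[G_\Q : G_K] = 2$ is prime to $p$), and then a standard filtration/Nakayama argument shows $A_m^{G_{K_n}} = 0$ and indeed $A^{G_{K_n}} = 0$ for all finite $n$ (and for $n = \infty$ since $K_\infty/K$ is a pro-$p$ extension, so $A^{G_{K_\infty}} = 0$ as well). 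With the fixed module vanishing, both outer terms in inflation-restriction vanish and we get the isomorphism; the case $m = \infty$ follows by passing to the direct limit over $m$, and the case $n = \infty$ by passing to the direct limit over $n$ (or directly, since $G_{K_\infty}$ still has trivial $A$-invariants).

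For \eqref{eq:isom2}, I would use the short exact sequence of $G_K$-modules $0 \to A_m \to A \xrightarrow{\varpi^m} A \to 0$ (using that $A$ is $\varpi$-divisible, so multiplication by $\varpi^m$ is surjective), which gives the long exact cohomology sequence
\[
A^{G_K} \xrightarrow{\varpi^m} A^{G_K} \to H^1(K, A_m) \to H^1(K, A)[\varpi^m] \to 0.
\]
Since $A^{G_K} = 0$ (again by absolute irreducibility of $\bar\rho_f$, as above), the connecting map is an isomorphism onto $H^1(K,A)[\varpi^m]$. One should be slightly careful that the $H^1$'s here are really $H^1(K_\Sigma/K, -)$ as in the definition of the Selmer groups, but the same exact sequence works verbatim for cohomology of $G_{K,\Sigma} = \Gal(K_\Sigma/K)$ since $A_m$, $A$ are all unramified outside $\Sigma$ and the sequence is one of $G_{K,\Sigma}$-modules.

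I do not expect any serious obstacle here — this is a routine control-type lemma. The one point requiring a little care is verifying $A_m^{G_{K_n}} = 0$ (equivalently $A^{G_{K_n}} = 0$) for all $n$ including $n = \infty$: the cleanest route is to observe that $A[\varpi] \cong \bar\rho_f$ has no $G_{K_\infty}$-fixed line because $\bar\rho_f$ is absolutely irreducible and $G_{K_\infty}$ has index $2p^\infty$ (a normal subgroup of $G_\Q$ of "index prime-to-$p$ times pro-$p$"), so restriction stays irreducible — or at least has no fixed vectors; then a straightforward induction on $m$ using $0 \to A_m \to A_{m+1} \to A[\varpi] \to 0$ gives $A_m^{G_{K_\infty}} = 0$, and $A^{G_{K_\infty}} = \bigcup_m A_m^{G_{K_\infty}} = 0$. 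If one prefers not to invoke irreducibility directly, one can instead cite the admissibility hypotheses which force the image of $\bar\rho_f$ to be large. Finally, I would remark that combining \eqref{eq:isom1} and \eqref{eq:isom2} (applied over $K_n$ in place of $K$) yields the compatible system $H^1(K_n, A_m) \simeq H^1(K_n, A)[\varpi^m]$, which is what gets used later in the control theorem for Selmer groups.
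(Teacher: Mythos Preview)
Your proof is correct and follows essentially the same approach as the paper: inflation--restriction together with the vanishing of $A_m^{G_{K_n}}$ (from absolute irreducibility of $\bar\rho_f$) for \eqref{eq:isom1}, and the long exact sequence attached to $0\to A_m\to A\xrightarrow{\varpi^m}A\to 0$ together with $H^0(K,A)=0$ for \eqref{eq:isom2}. You supply more detail than the paper on why the invariants vanish (including the case $n=\infty$), but the argument is the same.
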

\begin{proof}
The isomorphism \eqref{eq:isom1} follows from the inflation-restriction exact sequence and our assumption that $\bar{\rho}_f$ is absolutely irreducible. To prove $\eqref{eq:isom2}$, consider the tautological exact sequence
\begin{equation}\label{eq:tautological}
0 \rightarrow A_m \rightarrow A \xrightarrow{\varpi^m} A \rightarrow 0.
\end{equation}
Taking the long exact sequence in $G_K$-cohomology we obtain the exact sequence
\[
H^0(K,A) \rightarrow H^1(K,A_m) \rightarrow H^1(K,A) \xrightarrow{\varpi^m} H^1(K,A).
\]
Once again, our assumption that $\bar{\rho}_f$ is irreducible implies that the first term is zero. This concludes the proof.
\end{proof}

We now prove the following control theorem. We mention that, in the case $\mathcal{L}=\{\Gr,\Gr\}$, this is essentially \cite[Proposition 1.9(1)]{ChidaHsieh}, taking $S=\Delta=1$ in their notation.

\begin{theorem}\label{thm:control}
Let $m,m',n,n' \in \N \cup \{ \infty\}$ with $m \leq m'$ and $n
\leq n'$. There is an isomorphism
\[
\mathrm{res}_{m,n} \colon \Sel_{\mathcal{L}}(K_n, A_m) \to \Sel_{\mathcal{L}}(K_{n'},A_{m'})[\varpi^m]^{\mathrm{Gal}(K_{n'} / K_n)}.
\]
\end{theorem}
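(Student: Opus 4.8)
The plan is to prove the theorem by a standard control-theorem argument, combining the cohomological isomorphisms of Lemma~\ref{lem:pre-control} with a careful analysis of the behaviour of the local conditions under restriction. First I would reduce to the two separate assertions: (1) varying the cyclotomic-torsion parameter, i.e. $\Sel_{\mathcal{L}}(K_n,A_m)\xrightarrow{\sim}\Sel_{\mathcal{L}}(K_n,A_{m'})[\varpi^m]$ for $m\le m'$ and fixed $n$; and (2) varying the layer, i.e. $\Sel_{\mathcal{L}}(K_n,A_m)\xrightarrow{\sim}\Sel_{\mathcal{L}}(K_{n'},A_m)^{\mathrm{Gal}(K_{n'}/K_n)}$ for $n\le n'$ and fixed $m$. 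Composing the two (and using that restriction commutes with taking Galois invariants and $\varpi^m$-torsion) yields the general statement; one must also note that $\Sel_{\mathcal{L}}(K_{n'},A_{m'})[\varpi^m]=\Sel_{\mathcal{L}}(K_{n'},A_m)$ as submodules of $H^1_\Sigma(K_{n'},A_{m'})$, which is exactly assertion (1) applied at level $n'$.

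Next I would set up the commutative diagram whose top row is the defining sequence of $\Sel_{\mathcal{L}}(K_n,A_m)$ and whose bottom row is that of $\Sel_{\mathcal{L}}(K_{n'},A_{m'})[\varpi^m]^{\mathrm{Gal}(K_{n'}/K_n)}$ (or of $\Sel_{\mathcal{L}}(K_{n'},A_m)^{\mathrm{Gal}}$ in case (2)), with vertical maps induced by restriction. The global term is handled by \eqref{eq:isom1} and \eqref{eq:isom2}: for (1) the map $H^1_\Sigma(K_n,A_m)\to H^1_\Sigma(K_n,A_{m'})[\varpi^m]$ is an isomorphism by the argument of \eqref{eq:isom2} (absolute irreducibility of $\bar\rho_f$ kills $H^0(K_n,A)$, noting $K_n/K$ is a $p$-extension so $\bar\rho_f|_{G_{K_n}}$ is still irreducible); for (2) the map $H^1_\Sigma(K_n,A_m)\to H^1_\Sigma(K_{n'},A_m)^{\mathrm{Gal}(K_{n'}/K_n)}$ is an isomorphism by inflation–restriction, again using that $H^i(\mathrm{Gal}(K_{n'}/K_n),A_m^{G_{K_{n'}}})$ vanishes because $A_m^{G_{K_{n'}}}=0$. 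By the snake lemma, once the global map is an isomorphism and the map on the target of the local maps is injective, the induced map on Selmer kernels is an isomorphism; so the crux reduces to showing that, place by place, restriction sends the local condition at a place of $K_n$ isomorphically onto the (Galois-invariants of the, resp. $\varpi^m$-torsion in the) local condition upstairs, or at worst injectively on the quotients $H^1/H^1_{?}$.

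The main obstacle, as usual in control theorems, is the local analysis at the places in $\Sigma$, and I expect three cases. At $v\mid p$: the line $\mathcal{F}^+A$ is $G_{\Q_p}$-stable and $p$ splits in $K$ with $K_\infty/\mathfrak{p}$ deeply ramified, so one checks that $\mathcal{F}^\pm A_m$ are compatible with the maps $A_m\to A_{m'}$ and with restriction along $K_{n',v'}/K_{n,v}$; the conditions $\mathcal{L}_v\in\{\emptyset,\mathrm{Gr},0\}$ are all visibly preserved (the cases $\emptyset$ and $0$ are trivial, and $\mathrm{Gr}=H^1_f$ is handled by the standard ordinary computation, cf.\ \cite[Prop.~1.9(1)]{ChidaHsieh} with $S=\Delta=1$), and the relevant kernels/cokernels of the vertical maps on the local quotients are controlled by $H^0$ and $H^2$ of finite modules which vanish. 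At $v\nmid p$, $v\mid N$: here $\rho_f|_{G_\ell}$ has the shape \eqref{eq:local-at-ell} and the primes dividing $N$ split in $K$; since such $v$ is finitely decomposed in $K_\infty/K$ and $K_{n',v'}/K_{n,v}$ is unramified, one shows $H^1_f(K_{n,v},A_m)=H^1_{\mathrm{ur}}$ maps correctly, using that the unramified cohomology of a finite module behaves well under unramified base change and under $A_m\to A_{m'}$. At $v\mid\infty$ the cohomology is $2$-torsion hence vanishes since $p$ is odd, so these places contribute nothing. Assembling all local contributions and feeding them into the snake lemma completes the proof; I would remark that no surjectivity of the global-to-local map is needed, consistent with Remark~\ref{rmk:nonsurj}.
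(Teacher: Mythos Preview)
Your proposal is correct and follows essentially the same approach as the paper: reduce to the two separate control statements (in $m$ and in $n$), invoke Lemma~\ref{lem:pre-control} for the global cohomology, and then verify the local conditions place-by-place via injectivity on the quotients $H^1/H^1_?$. The paper makes explicit the one step you leave as ``behaves well'' at $v\mid N$ for case (1): the kernel of $H^1(K_v^{\mathrm{ur}},A_m)\to H^1(K_v^{\mathrm{ur}},A)$ is $A^{I_v}/\varpi^m A^{I_v}$, and the shape \eqref{eq:local-at-ell} forces $A^{I_v}$ to be divisible (either $0$ or a copy of $\mathfrak{F}/\mathcal{O}$), hence this quotient vanishes.
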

\begin{proof}
Let us first show that restriction gives us an isomorphism
\[
\Sel_{\mathcal{L}}(K,A_m) \rightarrow \Sel_{\mathcal{L}}(K_n,A_m)^{G_n}.
\]
In light of \eqref{eq:isom1} from the previous lemma, this will follow from showing the injectivity of the maps
\[
\begin{cases}
H^1(K_v^\ur,A_m) \rightarrow {H^1(K_{n,v}^\ur,A_m)}, & v \nmid p, \\
H^1(K_v,\mathcal{F}^-A_m) \rightarrow H^1(K_{n,v},\mathcal{F}^-A_m), & v \mid p,\\
H^1(K_v,A_m) \rightarrow H^1(K_{n,v},A_m), & v \mid p.
\end{cases}
\]
(Here and at other times we abuse notation and write $v$ for a compatible pair of places above the same rational prime in both $K$ and $K_n$.)

For the case when $v \nmid p$, note that since $K_\infty/K$ is an anticyclotomic $\Z_p$-extension, $v$ divides a rational prime $\ell$ which is not ramified at $K_\infty$, so $K_v^\ur = K_{n,v}^\ur$. For $v \mid p$, \cite[Lemma 1.8]{ChidaHsieh} says that $H^0(K_{n,v},\mathcal{F}^-A)=0$, and it is clear from the local description \eqref{eq:local-at-p} that $H^0(K_{n,v},A)=0$, so injectivity of the second and third maps follows from the inflation-restriction exact sequence; note that in the case $k=2$, this uses the admissibility assumption $a_p(f)^2 \not\equiv 1 \mod p$.

Now let us prove that we have an isomorphism
\[
\Sel(K,A_m) \rightarrow \Sel(K,A)[\varpi^m].
\]
The theorem will then follow from the composition of these various isomorphisms.  In light of \eqref{eq:isom2}, it suffices to show the injectivity of the maps
\[
\begin{cases}
H^1(K_v^\ur,A_m) \rightarrow {H^1(K_v^\ur,A)}, & v \nmid p, \\
H^1(K_v,\mathcal{F}^-A_m) \rightarrow H^1(K_v,\mathcal{F}^-A), & v \mid p\\
H^1(K_v,A_m) \rightarrow H^1(K_v,A), & v \mid p.
\end{cases}
\]
Taking the long-exact sequence corresponding to the tautological exact sequence \eqref{eq:tautological}, we see that the kernels of the maps above are given respectively by
\[
\begin{cases}
A^{I_v}/\left(\varpi^m A^{I_v}\right) & v \nmid p, \\
H^0(K_v,\mathcal{F}^-A) & v \mid p, \ \text{or}\\
H^0(K_v,A) & v \mid p.
\end{cases}
\]
where we have written $I_v$ for the inertia group at $v$. Thus, the case when $v=p$ follows by the same argument that we gave above.

Now consider $v \nmid p$. If $A$ is unramified at $v$, then  $A^{I_v}=A$ is divisible, we see that $A^{I_v}/\varpi^m A^{I_v}=0$ as desired. Otherwise, suppose $v \mid N$ and $A$ is ramified at $v$. In this case,  by \eqref{eq:local-at-ell}, the local representation restricted to inertia has the form
\[
\rho_f \big\vert_{I_v} \sim \left( \begin{matrix}\pm 1 & \ast \\ 0 & \pm 1 \end{matrix}\right)
\]
with $\ast$ nontrivial. Thus $A^{I_v}$ is either $0$ or isomorphic to $F_p / \cO$, in which case it is once again divisible, and this finishes the proof.
\end{proof}
\subsection{Variation of Iwasawa invariants}

In this section we give an application of the results in Section \ref{sec:nonexistence}. Throughout this section, assume that we have two modular forms $f$ and $g$, of levels $N_f$ and $N_g$ respectively (possibly of different weights). Let $K / \Q$ be an imaginary quadratic field and $p$ a prime such that the triples $(f,K,p)$ and $(g,K,p)$ are both admissible in the sense of \S\ref{sec:notation}, and assume further that
\[
\bar{\rho}_f \simeq \bar{\rho}_g.
\]
So in particular, modifying our previous notation to handle two modular forms by writing $A_f$ and $A_g$, we have an isomorphism of $G_\Q$-modules
\begin{equation}\label{eq:isom-mod-p}
A_f[\varpi] \simeq A_g[\varpi].
\end{equation}
We expand our sets of places $\Sigma_n$ to include the places above the rational primes dividing $N_g$.

We now wish to begin studying the Iwasawa invariants of our Selmer groups. For each pair of local conditions $\mathcal{L}$ defined in $\S$\ref{sec:Selmer-definitions}, denote by $\mathcal{X}_{\mathcal{L}}(f)$ the Pontryagin dual of the Selmer group $\Sel_{\mathcal{L}}(K_\infty,A_f)$, and similarly for $g$. Write
\[
\mu_{\mathcal{L}}(f) = \mu(\mathcal{X}_{\mathcal{L}}(f)) \quad \text{and} \quad \lambda_{\mathcal{L}}(f) = \lambda(\mathcal{X}_{\mathcal{L}}(f))
\]
as defined in Section \ref{sec:structure}, and similarly for $g$. Recall the notation from Definition \ref{def:error-term}, and set
\[
c_{\mathcal{L}}(f) = c(\mathcal{X}_{\mathcal{L}}(f)),
\]
and similarly for $g$. In all instances, we omit ${\mathcal{L}}$ from the notation when ${\mathcal{L}}=\{\Gr,\Gr\}$.

As explained in Example \ref{example:mistake}, there is an inaccuracy in the statements of \cite[Lemmas 2.8 and 2.9]{HL-MRL} due to the omission of the error terms $c_\cL(f)$ and $c_\cL(g)$. This in turn leads to an inaccuracy in the statement of \cite[Theorems~5.5 and 5.8]{HL-MRL}. In what follows, we will both correct and significantly improve these two theorems. On the one hand,  we  correct them by including the terms $c_\cL(f)$ and $c_\cL(g)$. On the other hand, we improve them by using Theorem \ref{thm:control} to completely bypass the use of non-primitive or auxiliary Selmer groups, thus eliminating all  the ``error terms'' in the statements of \cite[Theorems~5.5 and 5.8]{HL-MRL}. Furthermore, we remove the condition  on $\mu$-invariants utilizing the following theorem.

\begin{theorem}\label{thm:mu-vanishes}
Suppose $(f,K,p)$ is admissible in the sense of $\S$2. 
If $$\mathcal{L} \in \{\{\emptyset,0\}, \{\Gr,0\}, \{\emptyset,\Gr\}, \{\Gr,\Gr\} \},$$
then $\mu_{\mathcal{L}}(f)=0$.
\end{theorem}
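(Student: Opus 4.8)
The plan is to reduce the vanishing of $\mu_{\mathcal{L}}(f)$ to the single case $\mathcal{L} = \{0,0\}$ (the strict Selmer group) — or more precisely to an already-known $\mu$-vanishing statement in the literature — and then chase the $\mu$-invariant through the short exact sequences relating the various Selmer groups attached to the four allowed choices of local conditions. The key algebraic input is that $\mu(\mathcal{X})=0$ is equivalent to $\mathcal{X}/\varpi$ being a torsion $\Omega$-module (equivalently $\Sel(K_\infty, A[\varpi])$ being cofinitely generated over $\kappa$, i.e. corank $0$ over $\Omega$), and that in a short exact sequence of cofinitely generated $\Lambda$-modules, $\mu$ of the middle term vanishes iff $\mu$ of the two outer terms vanish; this follows from Lemma~\ref{lem:Omega} applied to the mod-$\varpi$ reductions together with the snake lemma. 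So the first step is to record that it suffices to prove $\mu_{\mathcal{L}}(f)=0$ after reducing mod $\varpi$, i.e. to show $\Sel_{\mathcal{L}}(K_\infty, A_f[\varpi])$ has finite $\kappa$-corank, where by Theorem~\ref{thm:control} (with $m=1$, $n=\infty$) this group is the direct limit of the $\Sel_{\mathcal{L}}(K_n, A_f[\varpi])$.

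Next I would invoke the work of Hsieh \cite{Hsieh-Doc} (already flagged in the introduction as the tool that removes the $\mu=0$ hypothesis): under the admissibility hypotheses — in particular $\bar\rho_f$ absolutely irreducible, $p \nmid 6N(k-2)!\phi(N)h_K$, the split/Heegner-type conditions on $N$ and $K$, and $a_p(f)$ a unit — the anticyclotomic $\mu$-invariant of the ($p$-ordinary, residually irreducible) Selmer group $\mathcal{X}(f) = \mathcal{X}_{\{\Gr,\Gr\}}(f)$ vanishes. This handles the case $\mathcal{L}=\{\Gr,\Gr\}$ directly. For the remaining three cases, I would compare $\mathcal{L}$ with $\{\Gr,\Gr\}$ by changing one local condition at a time. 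Passing from $\Gr$ to $\emptyset$ at $\mathfrak{p}$ (resp. to $0$ at $\bar{\mathfrak{p}}$) changes the defining local term by $H^1(K_{\infty,\mathfrak{p}}, A_f)/H^1_f(K_{\infty,\mathfrak{p}},A_f) \cong H^1(K_{\infty,\mathfrak{p}}, \mathcal{F}^- A_f)$ (resp. by removing $H^1_f(K_{\infty,\bar{\mathfrak{p}}},A_f)$, which is a quotient of $H^1(K_{\infty,\bar{\mathfrak{p}}}, \mathcal{F}^+A_f)$). Since $p$ splits in $K$ and $K_\infty/K$ has a single prime above each of $\mathfrak{p},\bar{\mathfrak{p}}$, these local cohomology groups are cohomology of the local Galois group of a finite extension of $\Qp$ with coefficients in the corank-$1$ modules $\mathcal{F}^{\pm}A_f$; their Pontryagin duals are finitely generated $\Lambda$-modules of $\mu$-invariant zero (their mod-$\varpi$ reductions are cohomology of $\mathcal{F}^{\pm}A_f[\varpi]$, manifestly cofinitely generated over $\kappa$, using local Euler characteristic / Tate local duality and that $\mathcal{F}^-A_f[\varpi]$ is unramified). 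Feeding these $\mu=0$ local terms and the $\mu=0$ of $\mathcal{X}_{\{\Gr,\Gr\}}(f)$ into the exact sequence
\[
0 \to \Sel_{\{\Gr,\Gr\}}(K_\infty,A_f) \to \Sel_{\mathcal{L}}(K_\infty,A_f) \to \text{(sum of the local terms)}
\]
and its analogue running the other way, and using that $\mu$ is additive along such sequences, yields $\mu_{\mathcal{L}}(f)=0$ for all four $\mathcal{L}$ in the list.

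The main obstacle is the black-box step: extracting from \cite{Hsieh-Doc} the precise statement that the anticyclotomic $\mu$-invariant of $\mathcal{X}(f)$ vanishes in exactly the generality of our admissibility hypotheses (Hsieh works with a nonvanishing-mod-$p$ statement for a $p$-adic $L$-function / theta element, and one must translate this into $\mu=0$ for the dual Selmer group via the relevant main conjecture or via a direct argument à la Vatsal). I expect the bookkeeping for the local-condition comparisons to be routine once the conventions of \cite{Castella, ChidaHsieh} are in hand; the delicate point is making sure the hypotheses under which Hsieh's result applies are implied by (and stated consistently with) ``$(f,K,p)$ admissible'' as defined in \S\ref{sec:notation}, together with the standing assumption that $\bar\rho_f$ is absolutely irreducible and the primes dividing $N$ split in $K$.
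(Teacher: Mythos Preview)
Your overall strategy---anchor $\mu=0$ for one choice of $\mathcal{L}$ using Hsieh's work and then propagate to the remaining three via exact sequences comparing local conditions---is the same as the paper's. The gap is in the anchor: you take $\mathcal{L}=\{\Gr,\Gr\}$ as the base case and hope to extract $\mu_{\{\Gr,\Gr\}}(f)=0$ directly from \cite{Hsieh-Doc}, but Hsieh's Theorem~B is a statement about the $\mu$-invariant of the Bertolini--Darmon--Prasanna $p$-adic $L$-function $\mathcal{L}(f/K)$, and that object is linked by the (one divisibility in the) Iwasawa main conjecture to the \emph{BDP Selmer group} $\mathcal{X}_{\{\emptyset,0\}}(f)$, not to $\mathcal{X}_{\{\Gr,\Gr\}}(f)$. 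Indeed $\mathcal{X}_{\{\Gr,\Gr\}}(f)$ has $\Lambda$-rank~$1$ in our Heegner setting, so there is no characteristic-ideal comparison with a $p$-adic $L$-function available for it. Your ``main obstacle'' paragraph correctly senses that a translation step is needed, but the translation is not ``main conjecture for $\{\Gr,\Gr\}$''---it is the divisibility $\Char_\Lambda\mathcal{X}_{\{\emptyset,0\}}(f)\supseteq(\mathcal{L}(f/K))^2$ of Kobayashi--Ota \cite{KobOta}, which together with Hsieh gives $\mu_{\{\emptyset,0\}}(f)=0$.

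The paper therefore runs the propagation in the opposite direction: from $\{\emptyset,0\}$ one gets $\{\Gr,0\}$ via the Poitou--Tate surjection $\mathcal{X}_{\{\emptyset,0\}}(f)\twoheadrightarrow\mathcal{X}_{\{\Gr,0\}}(f)$ (both torsion), then $\{\emptyset,\Gr\}$ via \cite[Lemma~3.6]{HL-MRL}, and finally $\{\Gr,\Gr\}$ via the exact sequence $0\to C\to\mathcal{X}_{\{\emptyset,\Gr\}}(f)\to\mathcal{X}_{\{\Gr,\Gr\}}(f)\to 0$ with $C$ torsion. Your local-cohomology bookkeeping would also work once the correct base case is in hand, but note that starting from a non-cotorsion group and enlarging or shrinking does not by itself bound $\mu$ of the target; the paper's chain is arranged so that each step is either a surjection of duals or has a torsion kernel/cokernel, which is what makes the $\mu$-additivity clean.
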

\begin{proof} 
Recall from $\S$\ref{sec:Selmer-definitions} the definitions of the various Selmer groups $\Sel_{\mathcal{L}}(K_\infty,A)$ with different local conditions at the primes above $p$, and write $\mathcal{X}_{\mathcal{L}}(f)$ for their Pontryagin duals. Let $\mathcal{L}(f/K)$ denote the Bertolini--Darmon--Prasanna $p$-adic $L$-function attached to $f$ over $K$ defined in \cite{BDP}. By \cite[Theorem 1.5]{KobOta}, we have
\[
\mathrm{char}_\Lambda\left( \mathcal{X}_{\emptyset,0}(f) \right) \supseteq \left( \mathcal{L}(f/K) \right)^2.
\]
By \cite[Theorem B]{Hsieh-Doc}, we know $\mu(\mathcal{L}(f/K))=0$, hence $\mu(\mathcal{X}_{\emptyset,0}(f))=0.$

The Poitou-Tate sequence gives us a surjective map
\[
\mathcal{X}_{\emptyset,0}(f) \twoheadrightarrow \mathcal{X}_{\Gr,0}(f),
\]
and since each of these groups is $\Lambda$-torsion by \cite[Corollary 3.8]{HL-MRL}, the kernel of this map must also be torsion. Since $\mu(\mathcal{X}_{\emptyset,0}(f))=0$ and $\mu$-invariants are nonnegative, we deduce that $\mu(\mathcal{X}_{\Gr,0}(f))=0$ by \cite[Proposition 2.1]{HL-MRL}. By \cite[Lemma~3.6]{HL-MRL}, this in turn allows us to deduce that $\mu(\mathcal{X}_{\emptyset,\Gr}(f))=0.$

As shown in the proof of \cite[Theorem 5.8]{HL-MRL}, we have an exact sequence
\[
0 \rightarrow C \rightarrow \mathcal{X}_{\emptyset,\Gr}(f) \rightarrow \mathcal{X}(f) \rightarrow 0,
\]
where $C$ is a torsion $\Lambda$-module. Applying \cite[Proposition 2.1]{HL-MRL} once again yields $\mu(f)=0$.
\end{proof}

We now offer a corrected and improved version of \cite[Theorems~5.5 and 5.8]{HL-MRL}.

\begin{theorem} \label{thm:variation-of-iwasawa-invariants}
Let $f$ and $g$ be modular forms such that $(f, K, p)$ and $(g,K,p)$ are admissible in the sense of $\S$\ref{sec:notation} and for which there is an isomorphism
$
\bar{\rho}_f \simeq \bar{\rho}_g
$
between their (irreducible) residual Galois representations.
Choose a pair of local conditions
\[
\mathcal{L} \in \{ \{\Gr,\Gr\}, \{\Gr,\emptyset\}, \{\emptyset,0\} \}.
\]
If $\mathcal{L}=\{\Gr,\Gr\}$, assume that $\Sel_{\mathcal{L}}(K_\infty,A_f)$ and $\Sel_{\mathcal{L}}(K_\infty,A_g)$ admit no proper $\Lambda$-submodules of finite index. Then we have
\[
\lambda_{\mathcal{L}}(f) + c_{\mathcal{L}}(f) = \lambda_{\mathcal{L}}(g) + c_{\mathcal{L}}(g).
\]
\end{theorem}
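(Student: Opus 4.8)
The plan is to reduce the statement to Corollary~\ref{cor:lambda-error-equality} by checking its three hypotheses for the pair of $\Lambda$-modules $A = \mathcal{X}_{\mathcal{L}}(f)$ and $B = \mathcal{X}_{\mathcal{L}}(g)$. Hypothesis (ii), that $\mu_{\mathcal{L}}(f) = \mu_{\mathcal{L}}(g) = 0$, follows immediately from Theorem~\ref{thm:mu-vanishes}, since each of the three choices of $\mathcal{L}$ in the statement lies in the list $\{\{\emptyset,0\}, \{\Gr,0\}, \{\emptyset,\Gr\}, \{\Gr,\Gr\}\}$ appearing there (note $\{\Gr,\emptyset\}$ and $\{\emptyset,\Gr\}$ agree up to swapping $\mathfrak{p}$ and $\bar{\mathfrak{p}}$, so Theorem~\ref{thm:mu-vanishes} applies, or one argues directly). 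Hypothesis (iii), the non-existence of non-trivial finite $\Lambda$-submodules, is part of the hypothesis when $\mathcal{L} = \{\Gr,\Gr\}$; for $\mathcal{L} = \{\Gr,\emptyset\}$ and $\mathcal{L} = \{\emptyset,0\}$ it must be supplied by the results of Section~\ref{sec:nonexistence} (these Selmer groups, being defined with a relaxed or strict condition at one prime above $p$, are handled there, and in the $\{\emptyset,0\}$ case the dual is $\Lambda$-torsion by \cite[Corollary 3.8]{HL-MRL} and Bertolini-type arguments apply). I would cite the appropriate statement from Section~\ref{sec:nonexistence} at this point.

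The crux is hypothesis (i): that $\mathcal{X}_{\mathcal{L}}(f)/\varpi \cong \mathcal{X}_{\mathcal{L}}(g)/\varpi$ as $\Omega$-modules, equivalently that $\Sel_{\mathcal{L}}(K_\infty, A_f)[\varpi]$ and $\Sel_{\mathcal{L}}(K_\infty, A_g)[\varpi]$ are isomorphic as $\Omega$-modules (Pontryagin duality turns $M/\varpi$ into $M^\vee[\varpi]$). Here I would use Theorem~\ref{thm:control}, which gives (taking $n = 0$, $n' = \infty$, $m = 1$, $m' = \infty$, after passing to the limit over $n$) that
\[
\Sel_{\mathcal{L}}(K_\infty, A_{f,1}) \cong \Sel_{\mathcal{L}}(K_\infty, A_f)[\varpi]
\]
as $\Omega$-modules, and likewise for $g$. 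Since $A_{f,1} = A_f[\varpi]$ and $A_{g,1} = A_g[\varpi]$ are isomorphic as $G_{\mathbb{Q}}$-modules by \eqref{eq:isom-mod-p}, it remains to check that this isomorphism identifies the defining local conditions. The local condition at $v \nmid p$ is the kernel of restriction to the maximal unramified extension, which is manifestly functorial in the $G_{K_v}$-module; the local conditions at $v \mid p$ are either everything ($\emptyset$), zero ($0$), or the ordinary condition $H^1_f$. The ordinary condition is defined via the quotient $\mathcal{F}^- A_{f,1}$, so I must verify that the mod-$\varpi$ filtration line $\mathcal{F}^+ A_{f,1}$ corresponds to $\mathcal{F}^+ A_{g,1}$ under \eqref{eq:isom-mod-p}. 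This is where the local description \eqref{eq:local-at-p} enters: the residual representation $\bar{\rho}_f = \bar{\rho}_g$ determines, on restriction to $G_p$, a unique unramified quotient (the sub on which inertia acts by $\epsilon^r$ being determined by $\bar{\rho}_f$ together with the admissibility hypotheses, in particular $a_p(f)$ a unit and, for $k=2$, $a_p(f)^2 \not\equiv 1 \bmod p$, which guarantee the two lines are distinct mod $\varpi$), and this quotient must be the reduction of $\mathcal{F}^- A_f$; hence it matches that of $A_g$.

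Once all three hypotheses of Corollary~\ref{cor:lambda-error-equality} are verified, conclusion (b) of that corollary is precisely $\lambda_{\mathcal{L}}(f) + c_{\mathcal{L}}(f) = \lambda_{\mathcal{L}}(g) + c_{\mathcal{L}}(g)$, which is what we want. I expect the main obstacle to be the careful bookkeeping in hypothesis (i): making the $\Omega$-module isomorphism in Theorem~\ref{thm:control} genuinely functorial in the coefficient module, and then checking that the $G_{\mathbb{Q}}$-isomorphism \eqref{eq:isom-mod-p} respects \emph{all} the local conditions simultaneously — in particular the compatibility of the mod-$\varpi$ ordinary filtrations at $\mathfrak{p}$ and $\bar{\mathfrak{p}}$, which relies on the fact that the residual representation alone pins down the relevant unramified line up to the admissibility hypotheses. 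The hypothesis-(iii) input for $\mathcal{L} \in \{\{\Gr,\emptyset\},\{\emptyset,0\}\}$ is a secondary point, but I would make sure to pinpoint exactly which result of Section~\ref{sec:nonexistence} delivers it, rather than leaving it implicit.
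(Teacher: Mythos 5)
Your proposal follows the paper's proof essentially verbatim: reduce to Corollary~\ref{cor:lambda-error-equality}, verify hypothesis (ii) from Theorem~\ref{thm:mu-vanishes}, hypothesis (iii) from Theorem~\ref{thm:main} when $\mathcal{L}=\{\Gr,\Gr\}$ or from earlier work otherwise, and hypothesis (i) from Theorem~\ref{thm:control} together with $A_f[\varpi]\simeq A_g[\varpi]$. Your added care in unwinding hypothesis (i) — checking that the $G_\Q$-isomorphism $A_f[\varpi]\simeq A_g[\varpi]$ respects the unramified conditions away from $p$ and the $\mathcal{F}^{\pm}$ filtrations at $\mathfrak{p},\bar{\mathfrak{p}}$ — is a genuine improvement over the paper's one-line invocation, and your observation that $\{\Gr,\emptyset\}$ and $\{\emptyset,\Gr\}$ differ only by swapping the two primes above $p$ correctly reconciles the local-condition lists in Theorems~\ref{thm:mu-vanishes} and~\ref{thm:variation-of-iwasawa-invariants}. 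The one correction: for $\mathcal{L}\in\{\{\Gr,\emptyset\},\{\emptyset,0\}\}$, hypothesis (iii) is \emph{not} supplied by Section~\ref{sec:nonexistence} of this paper (which proves only the $\{\Gr,\Gr\}$ case via Theorem~\ref{thm:main}); the paper instead cites \cite[Proposition 3.12]{HL-MRL}, so you would need to point there rather than expect to find the statement in Section~\ref{sec:nonexistence}.
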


\begin{proof}
Our assumption that $A_f[\varpi] \simeq A_g[\varpi]$ implies, thanks to Theorem \ref{thm:control}, that we have isomorphisms
\[
\Sel_{\mathcal{L}}(K_\infty,A_f)[\varpi] \simeq \Sel_{\mathcal{L}}(K_
\infty,A_g)[\varpi],
\]
and taking duals, this gives an isomorphism \[
\mathcal{X}_{\mathcal{L}}(f)/\varpi \simeq \mathcal{X}_{\mathcal{L}}(g)/\varpi.
\]

By Theorem \ref{thm:mu-vanishes}, we have
\[
\mu_{\mathcal{L}}(f)= \mu_{\mathcal{L}}(g)=0.
\]
If ${\mathcal{L}}=\{\Gr,\Gr\}$, then by Theorem \ref{thm:main} we know that $\mathcal{X}_{\mathcal{L}}(f)$ and $\mathcal{X}_{\mathcal{L}}(f)$ have no proper $\Lambda$-submodules of finite index; for the other choices of $\mathcal{L}$, this is \cite[Proposition 3.12]{HL-MRL}. Thus, all of the hypotheses of Corollary \ref{cor:lambda-error-equality} are satisfied, and the desired result follows.
\end{proof}

\begin{remark}
In the case where the Selmer groups $\Sel(K_\infty,A_f)$ and $\Sel(K_\infty,A_g)$ are $\Lambda$-cotorsion, the analogue of Theorem~\ref{thm:variation-of-iwasawa-invariants} has been studied in \cite{CastKimLon}.
\end{remark}
\begin{remark}
Note that when $\cL=\{\emptyset,0\}$, we have $c_\cL(f)=c_\cL(g)=0$ since $\mathcal{X}_\cL(f)$ and $\mathcal{X}_\cL(g)$ are $\Lambda$-torsion.
When $\mathcal{X}_\cL(f)$ and $\mathcal{X}_\cL(g)$ are not $\Lambda$-torsion, it is not clear to us how to calculate $c_{\mathcal{L}}(f)$ and $c_{\mathcal{L}}(g)$ in general since it requires an explicit description of the $\Lambda$-modules $\mathcal{X}_\cL(f)$ and $\mathcal{X}_\cL(g)$. But it seems reasonable to expect that $c_{\mathcal{L}}(f)=c_{\mathcal{L}}(g)$ if $f$ and $g$ belong to the same branch of a Hida family, which would imply that $\lambda_{\mathcal{L}}(f)=\lambda_{\mathcal{L}}(g)$, mirroring the results in \cite{CastKimLon}. In certain special cases, it is possible to determine the structure of $\mathcal{X}(f)$ and $\mathcal{X}(g)$ via the control theorem. For example, if $\Sel(K,A_f)^\wedge\cong \cO$, then $\Sel(K_\infty,A_f)^\wedge\cong\Lambda$ and $c(f)=0$ (see the discussion in \cite[Page 428]{Perrin-Riou}).
\end{remark}

\section{Non-existence of submodules of finite index}\label{sec:nonexistence}

The goal of this section is to prove the following theorem. \noindent Hypothesis \textbf{(Sha)} is defined in $\S$\ref{subsec:universal-norm-com[utations}.

\begin{theorem}\label{thm:main}
Assume $(f,K,p)$ is admissible in the sense of $\S\ref{sec:notation}$ and that hypothesis \textbf{(Sha)} hold. Then $\Sel(K_\infty,A)$ contains no proper $\Lambda$-submodules of finite index.
\end{theorem}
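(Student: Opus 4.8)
The plan is to follow the template established by Bertolini \cite{Bertolini-Bordeaux} in the elliptic curve case, as adapted in \cite{HLV}, transposing each algebraic input to the modular form setting. Write $\X = \Sel(K_\infty,A)^\vee$ for the Pontryagin dual; I must show $\X$ has no nonzero finite $\Lambda$-quotient, equivalently (by the structure theory of $\Lambda$-modules and Nakayama) that $\X$ has no nonzero finite submodule after passing to the relevant localizations — concretely, one shows that any $\Lambda$-homomorphism from $\X$ onto a finite module is zero. The standard reformulation is: it suffices to prove that for every height-one prime $\n$ of $\Lambda$ containing $\varpi$ (in practice just $\n = (\varpi)$), the module $\X/\n\X$, or rather the relevant sub/quotient, has no finite part; this is packaged in Bertolini's argument as the statement that a certain natural map of Selmer groups is surjective with controlled cokernel, combined with the fact that $\Sel(K_\infty,A)$ is cofinitely generated.

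First I would set up the dual Selmer sequence: using the control theorem (Theorem~\ref{thm:control}) together with the local triviality of $H^0(K_{n,v},\mathcal F^- A)$ and $H^0(K_{n,v},A)$ for $v\mid p$ established in its proof, one obtains that $\Sel(K_n,A)$ is the $\Gamma$-invariants of $\Sel(K_\infty,A)$ and that $\Sel(K_\infty,A)[\varpi]$ is controlled by $\Sel(K_\infty,A_1)$, which is finitely cogenerated over $\kappa$. The key cohomological input — where hypothesis \textbf{(Sha)} enters — is a computation of universal norms: one needs that the module of universal norms of local cohomology classes at $\mathfrak p$ and $\bar{\mathfrak p}$, compatible along $K_\infty/K$, is large enough (of corank matching the generic rank of $\X$), and dually that the relevant $H^1_{\Iw}$ has no unexpected torsion. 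This is typically proved by combining a global duality (Poitou--Tate) exact sequence over $K_\infty$ with the nonvanishing of Heegner-point-type classes or Kolyvagin-system bounds; the admissibility hypotheses feed into the irreducibility and local ramification statements \eqref{eq:local-at-p}, \eqref{eq:local-at-ell} that make the local terms behave.

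The decisive step — and the main obstacle — is Bertolini's surjectivity-type argument: one forms the global-to-local map whose kernel is $\Sel(K_\infty,A)$, but since (as flagged in Remark~\ref{rmk:nonsurj}) this map is \emph{not} surjective, one cannot simply invoke Greenberg's criterion from \cite{Greenberg11}. Instead I would follow the route of \cite{HLV}: pass to a twist or to the $\psi$-localized setting, use the compact Iwasawa cohomology $\HIw$ and its local analogues, and show that the obstruction to surjectivity is itself $\Lambda$-torsion-free or at least has no finite submodule, so that the snake lemma applied to the defining sequence of $\Sel(K_\infty,A)$ propagates the no-finite-submodule property. Concretely: if $F \subseteq \X$ were a nonzero finite $\Lambda$-submodule, then $F$ would be killed by some power of the augmentation ideal and by $\varpi^m$; one derives a contradiction by exhibiting, via the universal norm computation under \textbf{(Sha)}, a class in $\Sel(K_\infty,A)$ that maps nontrivially into the $\Gamma$-coinvariants in a way incompatible with the finiteness of $F^\vee$. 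Finally, combining this with Lemma~\ref{lem:pre-control} and the cofinite generation of $\Sel(K_\infty,A)$ over $\Lambda$ completes the argument. I expect the universal norm computation (the content of \textbf{(Sha)}) and the careful bookkeeping of the cokernel of the localization map to be where essentially all the work lies; the passage from there to the non-existence of finite submodules is then the formal algebra of Bertolini, applied verbatim.
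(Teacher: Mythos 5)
Your high-level orientation is right --- follow Bertolini's template, recognize (via Remark~\ref{rmk:nonsurj}) that Greenberg's surjectivity criterion is unavailable, and expect the weight of the argument to sit in a universal-norm computation --- but the proposal never reaches the central technical device on which the paper's proof actually turns, and along the way it misdescribes what hypothesis \textbf{(Sha)} does.

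The pivot of Bertolini's argument, reproduced in the paper as Theorem~\ref{thm:perfect-pairing} and Corollary~\ref{cor:no-finite-findex-if-torsion-free}, is a perfect pairing of Tate cohomology groups
\[
\Sel(K,T)/US(K,T)\times \Sel(K_\infty,A)_\Gamma\longrightarrow \Gamma\otimes_\cO\mathfrak{F}/\cO,
\]
obtained by assembling the pairings $\hat H^0(G_n,\Sel(K_n,A_m))\times\hat H^{-1}(G_n,\Sel(K_n,A_m))\to I_{m,n}/I_{m,n}^2$ of Proposition~\ref{prop:tatepairing} and identifying the two sides in the limit. This pairing converts the desired statement --- no finite-index $\Lambda$-submodule of $\Sel(K_\infty,A)$, equivalently no finite $\Lambda$-submodule of $\mathcal X$ (note: you wrote ``finite quotient'' of $\mathcal X$, which is the dual of the wrong thing) --- into the concrete assertion that $\Sel(K,T)/US(K,T)$ is $\cO$-torsion-free. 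Your proposal omits this reduction entirely; without it there is no bridge from ``universal norms are large'' to ``no finite submodule,'' and the snake-lemma/twist/$\psi$-localization route you sketch does not appear in the paper and is not obviously set up to close the gap.

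The role of \textbf{(Sha)} is also more specific than a ``universal norm computation.'' The hypothesis asserts $\Sh(K_{n+1}/K_n,A_1)=0$, i.e.\ vanishing of the relative Shafarevich--Tate groups, and its only job (Proposition~\ref{prop:un-in-dn}) is to show that the free $R_n$-submodules $U_n\subset\Sel(K_n,A_1)$ constructed from the nonvanishing Heegner classes (Proposition~\ref{prop:heegner-non-triv}, Lemma~\ref{lem:free-submodule-U}) actually land inside the divisible part $\D(K_n,A_1)$. Combined with the identification $\Sel(K,T)=\varprojlim_m\D(K,A_m)$ (Proposition~\ref{prop:sel-is-tate}), one lifts a generator of $U_n$ to a free $\Zp[G_n]$-module inside $T\D_n$, takes corestriction to level $0$, and extracts by compactness a universal norm in $\Sel(K,T)$ that is not divisible by $\varpi$; since $\rank_\cO US(K,T)=1$ (from $\rank_\Lambda\mathcal X=1$), this forces $US(K,T)\cong\cO$ and hence the required torsion-freeness. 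In short: your proposal is missing the pairing that makes the reduction possible, misidentifies what \textbf{(Sha)} controls, and does not explain where the nontrivial universal norm element comes from.
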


Suppose for the moment that $f \in S_2(\Gamma_0(N))$ is the modular form associated to a $p$-ordinary elliptic curve $E/\Q$. In this case, Theorem \ref{thm:main} is due to Bertolini \cite[Theorem 7.1]{Bertolini-Bordeaux}. Thus, we aim to extend this result to the more general class of modular forms which we have been studying in this paper.

Bertolini's result was recently extended to the case of elliptic curves with supersingular reduction at $p$ in \cite{HLV} by Vigni and the present authors. The careful reader will notice that, once certain objects have been properly defined and their properties established, the proof of \cite[Theorem 7.1]{Bertolini-Bordeaux} goes through quite formally. Our strategy in the present paper will thus be to put all of the necessary pieces into place and then describe the main steps of the proof, referring the reader to the proofs in \cite[Section 7]{Bertolini-Bordeaux} and \cite[Section 5]{HLV} for more details.

\subsection{Heegner cycles}\label{subsec:heegner-cycles} Recall that by work of  Bertolini--Darmon--Prasana \cite{BDP} (see also \cite{CastellaHsieh, howard,LongoVigni}), one may use generalized Heegner cycles in order to construct a system of compatible cohomology classes which aid in the study of our Selmer groups.

Recall that $T_m=T/\varpi^m T$ and $A_m = A[\varpi^m]$, and in fact these are isomorphic, with the notation suggesting compatibility with projective and direct limits, respectively. We define, for $n \in \N$,
\[
\Sel(K_n, T) = \varprojlim_{m} \Sel(K_n,A_m),
\]
and then we define
\[
\hat{S}(K_\infty,T)= \varprojlim_n \Sel(K_n,T),
\]
where the limits are taken with respect to the multiplication-by-$\varpi$ and corestriction maps.

In what follows, we shall write $M^\wedge$ for the Pontryagin dual of a $\cO$-module $M$.

\begin{lemma}
There is a canonical isomorphism of $\Lambda$-modules
\[
\hat{S}(K_\infty, T) \simeq \Hom_\Lambda(\Sel(K_\infty, A)^\wedge, \Lambda).
\]
\end{lemma}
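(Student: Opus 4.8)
The plan is to identify both sides with Iwasawa-cohomology objects via the standard duality between inverse and direct limits. First I would recall that $\hat S(K_\infty,T) = \varprojlim_n \varprojlim_m \Sel(K_n,A_m)$, where the transition maps are multiplication-by-$\varpi$ in $m$ and corestriction in $n$, while $\Sel(K_\infty,A)^\wedge = \bigl(\varinjlim_n \Sel(K_n,A)\bigr)^\wedge = \varprojlim_n \Sel(K_n,A)^\wedge$. The key local input is the perfect pairing coming from $T_m \simeq A_m$ together with the control theorem (Theorem~\ref{thm:control}), which identifies $\Sel(K_n,A_m)$ with $\Sel(K_\infty,A)[\varpi^m]^{\Gamma_n}$-type objects; concretely, one gets $\Sel(K_n, A_m)^\wedge \simeq \Sel(K_n,A)^\wedge / \varpi^m$ compatibly in $m$ and $n$, so that $\varprojlim_m \Sel(K_n,A_m)^\wedge \simeq \Sel(K_n,A)^\wedge$ (its $\varpi$-adic completion, which is already complete since these are finitely generated $\cO$-modules). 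Dualizing and passing to the limit in $n$ should then produce the isomorphism.

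More precisely, I would proceed in the following steps. (1) Show that Pontryagin duality turns the inner inverse limit over $m$ (with multiplication-by-$\varpi$ transition maps) into a direct limit: $\varprojlim_m \Sel(K_n,A_m) \simeq \bigl(\varinjlim_m \Sel(K_n,A_m)^\wedge\bigr)^\wedge$, where the dual system has transition maps that are the duals of multiplication-by-$\varpi$. (2) Use $A_m \simeq T_m$ and the tautological sequence $0 \to A_m \to A \xrightarrow{\varpi^m} A \to 0$, together with irreducibility of $\bar\rho_f$ (so that the relevant $H^0$ vanish, exactly as in Lemma~\ref{lem:pre-control}), to identify $\Sel(K_n,A_m)^\wedge$ with $\Sel(K_n,A)^\wedge/\varpi^m$ in a way compatible with the transition maps, so the direct limit over $m$ becomes simply $\Sel(K_n,A)^\wedge$ (as a $\varpi$-adically complete $\cO$-module). (3) Now take the inverse limit over $n$ with respect to corestriction on $\hat S$, which dualizes to the direct limit over restriction on $\Sel(K_\infty,A)$; this yields $\hat S(K_\infty,T) \simeq \varprojlim_n \Sel(K_n,A)^\wedge$. (4) Finally, reinterpret $\varprojlim_n \Sel(K_n,A)^\wedge$: since $\Sel(K_\infty,A)^\wedge = \varprojlim_n \Sel(K_n,A)^\wedge$ is a compact $\Lambda$-module, the inverse limit of the finite-level duals, suitably keeping track of the $\Gamma$-action, is precisely $\Hom_\Lambda(\Sel(K_\infty,A)^\wedge,\Lambda)$ — this is the familiar fact that for a finitely generated $\Lambda$-module $X$ one has $\varprojlim_n X_{\Gamma_n}^\wedge \otimes(\text{twist}) \simeq \Hom_\Lambda(X,\Lambda)$, using that $\Lambda \simeq \varprojlim_n \cO[G_n]$ and self-duality of $\cO[G_n]$.

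The $\Lambda$-module structures must be tracked carefully throughout: the $\Gamma$-action on $\hat S(K_\infty,T)$ comes from the Galois action on each $\Sel(K_n,T)$, and one must check that under the chain of identifications it matches the action on $\Hom_\Lambda(\Sel(K_\infty,A)^\wedge,\Lambda)$ coming from the target copy of $\Lambda$ (which involves the inversion $\gamma \mapsto \gamma^{-1}$ typical of such dualities). I expect the main obstacle to be precisely this bookkeeping of the $\Lambda$-action and the compatibility of all transition maps in step (2)–(4) — in particular verifying that the control-theorem isomorphisms of Theorem~\ref{thm:control} are compatible with corestriction as well as restriction, so that the double limit can be manipulated. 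Everything else is a formal consequence of Pontryagin duality, the finiteness of the residual representation's invariants, and the structure of $\Lambda$ as $\varprojlim_n \cO[G_n]$. A clean way to organize the argument is to cite the analogous computation in \cite[Section 5]{HLV} and \cite[Section 7]{Bertolini-Bordeaux}, indicating which steps carry over verbatim and flagging the use of the control theorem as the one place where the admissibility and irreducibility hypotheses enter.
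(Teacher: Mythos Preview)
The paper's proof is a one-line citation: it invokes \cite[Lemme~5]{Perrin-Riou} and observes that the only input specific to the present setting is the control theorem (Theorem~\ref{thm:control}). Your proposal attempts to unpack that argument, and the ingredients you list---control theorem, Pontryagin duality, self-duality of $\cO[G_n]$, passage to the limit---are indeed the right ones. However, the execution in steps (2)--(4) contains a genuine error.

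The problem is a conflation of two different duals. In step~(2) you claim that the direct limit $\varinjlim_m \Sel(K_n,A_m)^\wedge$ is $\Sel(K_n,A)^\wedge$, and then in step~(3) that $\hat S(K_\infty,T)\simeq \varprojlim_n \Sel(K_n,A)^\wedge$. But writing $X=\Sel(K_\infty,A)^\wedge$ and $X_n=\Sel(K_n,A)^\wedge$, the control theorem gives $X_n=X_{\Gamma_n}$, and $\varprojlim_n X_{\Gamma_n}=X$ along the natural projections. So your chain of identifications, read literally, yields $\hat S(K_\infty,T)\simeq X$ rather than $\Hom_\Lambda(X,\Lambda)$. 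The ``familiar fact'' you invoke in step~(4) does not repair this: $X_{\Gamma_n}^\wedge$ is the Pontryagin dual of a finitely generated $\cO$-module, hence has a divisible part, and there is no natural identification of $\varprojlim_n X_{\Gamma_n}^\wedge$ with $\Hom_\Lambda(X,\Lambda)$.

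What is actually happening is that $\Sel(K_n,T)=\varprojlim_m \Sel(K_n,A)[\varpi^m]$ is the \emph{Tate module} $T_\varpi\Sel(K_n,A)$, which is canonically $\Hom_\cO(X_{\Gamma_n},\cO)$ (the $\cO$-linear dual, not the Pontryagin dual; this kills the finite part). The self-duality of the group ring then gives $\Hom_\cO(X_{\Gamma_n},\cO)\simeq \Hom_{\cO[G_n]}(X_{\Gamma_n},\cO[G_n])$, and one checks that corestriction on the left matches the map induced by the projection $\cO[G_{n+1}]\to\cO[G_n]$ on the right. Passing to the inverse limit over $n$ then yields
\[
\hat S(K_\infty,T)\;\simeq\;\varprojlim_n \Hom_{\cO[G_n]}(X_{\Gamma_n},\cO[G_n])\;\simeq\;\Hom_\Lambda(X,\Lambda),
\]
which is Perrin-Riou's computation. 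You do mention the self-duality of $\cO[G_n]$ in step~(4), so you have the key ingredient; the fix is to replace the Pontryagin dual $(-)^\wedge$ by the $\cO$-linear dual $\Hom_\cO(-,\cO)$ at the finite-level stage, and to insert the group-ring self-duality \emph{before} taking the limit in $n$ rather than after.
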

\begin{proof}
The argument of \cite[Lemme~5]{Perrin-Riou} holds in our setting upon replacing the control theorem used in \cite{Perrin-Riou} with Theorem~\ref{thm:control}.
\end{proof}

\begin{definition}\label{def:heegner-points}
For each $n \geq 1$, we denote by $z_n$ the generalized Heegner class
\[
z_n \in \Sel(K_n, T) 
\]
constructed in \cite[$\S$4.4]{CastellaHsieh}. 
\end{definition}

These Heegner classes satisfy the following ``three-term relation'':
\begin{equation}\label{eq:three-term-relation}
\mathrm{cores}_{K_{n+1}/K_n} (z_{n+1}) = a_p(f) z_n - p^{k-2} \mathrm{res}_{K_{n+1}/K_{n}}(z_{n-1}).
\end{equation}
Here, $\mathrm{cores}_{K_{n+1}/K_n}$ and $\mathrm{res}_{K_{n+1}/K_n}$ denote the natural corestriction and restriction maps at the level of Galois cohomology.

Let $R_{m,n}=(\cO/\varpi^m\cO)][G_n]$, where $G_n=\mathrm{Gal}(K_n/K)$.

\begin{definition}\label{def:heegner-modules}
Denote by $\mathcal{A}_{m,n}$ the $R_{m,n}$-submodule of $\Sel(K_n,A_m)$ generated by the Heegner class $z_n$. Taking limits, we obtain the $\Lambda$-module
\[
\mathcal{A}_\infty = \varinjlim_{m} \mathcal{A}_{m,m} \subset \Sel(K_\infty,A)
\]
and its Pontryagin dual
\[
\mathcal{H}_\infty =\mathcal{A}_\infty^\wedge =\varprojlim_{m} \left(\mathcal{A}_{m,m}\right)^\wedge \subset \hat{S}(K_\infty,T).
\]
We also define
\[
\fA_{m,n}=\mathcal{A}_{\infty}^{\Gamma_n}[\varpi^m].
\]
When $(m,n)=(1,n)$ we shall omit $m$ from the notation and simply write $R_n, \mathcal{A}_n,$ and $\fA_n$.
\end{definition}

The following lemma will allow us to henceforth consider $\A_{m,n}$ a $R_{m,n}$-submodule of $\Sel(K_n,T_m)$.

\begin{lemma}\label{lem:heegner-in-selmer}
For each $m,n$ there exist injections of $R_{m,n}$-modules
\[
\A_{m,n} \hookrightarrow \Sel(K_n,T_m)
\]
and
\[
\A_{m,n} \hookrightarrow \A_{m,n+1}.
\]
The image of the latter map is precisely $\A_{m,n+1}^{\mathrm{Gal}(K_{n+1}/K_n)}$.
\end{lemma}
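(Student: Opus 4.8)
The plan is to obtain both injections directly from the control-theoretic isomorphisms of Theorem \ref{thm:control}, together with the basic definitions of $\A_{m,n}$, $\A_\infty$, and $\fA_{m,n}$. For the first injection, recall that $\A_{m,n}$ is by definition the $R_{m,n}$-submodule of $\Sel(K_n, A_m)$ generated by the image of $z_n$; on the other hand, the Heegner class $z_n$ lives in $\Sel(K_n, T) = \varprojlim_m \Sel(K_n, A_m)$, and reduction modulo $\varpi^m$ sends it into $\Sel(K_n, T_m)$. Under the canonical identification $A_m \simeq T_m$ of $G_\Q$-modules (and hence of Selmer groups), the submodule generated by $z_n$ in $\Sel(K_n, A_m)$ is carried to the submodule generated by the mod-$\varpi^m$ reduction of $z_n$ in $\Sel(K_n, T_m)$. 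So the "injection" $\A_{m,n} \hookrightarrow \Sel(K_n, T_m)$ is really this identification followed by the natural inclusion, and there is essentially nothing to check beyond unwinding definitions — the point of stating it as a lemma is just to license the notational abuse in the sequel.

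For the second injection, I would argue as follows. By definition $\A_\infty = \varinjlim_m \A_{m,m} \subseteq \Sel(K_\infty, A)$ and $\fA_{m,n} = \A_\infty^{\Gamma_n}[\varpi^m]$. First I would show that $\A_{m,n}$ may be identified with a submodule of $\A_\infty^{\Gamma_n}[\varpi^m]$: the class $z_n \in \Sel(K_n, A_m)$ maps, under the restriction map $\Sel(K_n, A_m) \to \Sel(K_\infty, A_m)$ and the inclusion $\Sel(K_\infty, A_m) = \Sel(K_\infty, A)[\varpi^m] \hookrightarrow \Sel(K_\infty, A)$ (using $A_m = A[\varpi^m]$ and that $A$ is divisible), to an element fixed by $\Gamma_n$ and killed by $\varpi^m$; moreover by Theorem \ref{thm:control} with $\mathcal{L} = \{\Gr, \Gr\}$, $m = m'$ and $n \leq n' = \infty$, the restriction map
\[
\Sel(K_n, A_m) \xrightarrow{\ \sim\ } \Sel(K_\infty, A_m)[\varpi^m]^{\mathrm{Gal}(K_\infty/K_n)} = \Sel(K_\infty, A)[\varpi^m]^{\Gamma_n}
\]
is an isomorphism, so it is in particular injective and $R_{m,n}$-equivariant. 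This identifies $\A_{m,n}$ with the $R_{m,n}$-submodule of $\A_\infty^{\Gamma_n}[\varpi^m]$ generated by the image of $z_n$. Comparing levels $n$ and $n+1$, the restriction map $\Sel(K_n, A_m) \to \Sel(K_{n+1}, A_m)$ is compatible with these control isomorphisms (both identify the source with $\Gamma_n$- resp. $\Gamma_{n+1}$-invariants inside the same group $\Sel(K_\infty, A)[\varpi^m]$), it sends $z_n$ to $z_{n+1}$'s level-$n$ counterpart — more precisely, it is the inclusion $\Sel(K_\infty, A)[\varpi^m]^{\Gamma_n} \subseteq \Sel(K_\infty, A)[\varpi^m]^{\Gamma_{n+1}}$, which is plainly injective — hence induces the desired injection $\A_{m,n} \hookrightarrow \A_{m,n+1}$.

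The claim that the image is exactly $\A_{m,n+1}^{\mathrm{Gal}(K_{n+1}/K_n)}$ is the one genuinely requiring an argument, and I expect it to be the main obstacle. The inclusion $\A_{m,n} \subseteq \A_{m,n+1}^{\mathrm{Gal}(K_{n+1}/K_n)}$ is clear from $\Gamma_n$-equivariance. For the reverse inclusion one uses the three-term relation \eqref{eq:three-term-relation}: writing $\Delta = \mathrm{Gal}(K_{n+1}/K_n)$, an element of $\A_{m,n+1}^\Delta$ is $\sigma \cdot (\text{image of } z_{n+1})$ for some $\sigma \in R_{m,n+1}$ with $\delta \sigma z_{n+1} = \sigma z_{n+1}$ for all $\delta \in \Delta$; averaging, $(\mathrm{Nm}_\Delta \sigma) z_{n+1} = |\Delta| \cdot \sigma z_{n+1}$, but $|\Delta| = p$ is not invertible mod $\varpi^m$, so instead one applies corestriction $\mathrm{cores}_{K_{n+1}/K_n}$, which on a $\Delta$-invariant class acts as multiplication by the norm and lands in $\Sel(K_n, A_m)$; combined with \eqref{eq:three-term-relation} expressing $\mathrm{cores}(z_{n+1})$ in terms of $z_n$ and $\mathrm{res}(z_{n-1})$ — and noting $a_p(f)$ is a unit by admissibility — one recovers the $\Delta$-invariant class as coming from level $n$. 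This is exactly the argument carried out in \cite[Section 7]{Bertolini-Bordeaux} and \cite[Section 5]{HLV}, and I would follow their treatment, taking care that the unit $a_p(f)$ and the vanishing of the relevant $H^0$'s (already established in the proof of Theorem \ref{thm:control}) make the bookkeeping go through in the modular-form setting. I would therefore carry out the first two injections in detail and then refer to those sources for the surjectivity onto the $\Delta$-invariants.
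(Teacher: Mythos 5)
Your proposal has a gap that also explains why the paper's proof is a one-liner. The paper's claim that the lemma is ``immediate from Definitions \ref{def:heegner-points}, \ref{def:heegner-modules} and Theorem \ref{thm:control}'' is really an assertion about the modules $\fA_{m,n}=\A_\infty^{\Gamma_n}[\varpi^m]$ (which are what Lemma \ref{lem:free-submodule-U} and Proposition \ref{prop:un-in-dn} actually use): for those, the first injection is Theorem \ref{thm:control} with $n'=\infty$, the second is the inclusion $\A_\infty^{\Gamma_n}\subset\A_\infty^{\Gamma_{n+1}}$, and the identification of the image with the $\mathrm{Gal}(K_{n+1}/K_n)$-invariants is ordinary Galois descent for $\Gamma_n/\Gamma_{n+1}$. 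Nothing about Heegner classes enters.

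You instead work with the calligraphic $\A_{m,n}$, and there the gap is real: you assert that $\mathrm{res}(z_n)$ ``lands in $\A_\infty$'' and that restriction identifies $z_n$ with an $R_{m,n+1}$-multiple of $z_{n+1}$, but you never justify this, and it is not a consequence of Theorem \ref{thm:control}. Since $\A_\infty=\varinjlim_m\A_{m,m}$ is built out of the higher-level classes, placing $\mathrm{res}(z_n)$ in it requires a \emph{restriction}-compatibility for the $z_n$. The three-term relation \eqref{eq:three-term-relation} runs the other way — it expresses $\mathrm{cores}(z_{n+1})$ in terms of $z_n$ and $\mathrm{res}(z_{n-1})$ — and does not hand you $\mathrm{res}(z_n)$ in terms of $z_{n+1}$. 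This is exactly why your surjectivity sketch stalls at the non-invertibility of $|\Delta|=p$: you are trying to invert a corestriction relation where a restriction statement is needed. Either prove the nesting $\mathrm{res}(\A_{m,n})\subset\A_{m,n+1}$ as a genuine property of the classes $z_n$, or (more likely the intended reading) replace $\A_{m,n}$ by $\fA_{m,n}$ throughout, in which case the lemma really is formal and the paper's one-line proof suffices.
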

\begin{proof}
Remembering that $A_m \simeq T_m$, this is immediate from Definitions \ref{def:heegner-points} and \ref{def:heegner-modules} and Theorem \ref{thm:control}.
\end{proof}

In \cite{Bertolini-Bordeaux}, Bertolini imposed the hypothesis that $\mathcal{A}_\infty \neq 0$, but this is now known to be true, as we record in the next result.

\begin{proposition}\label{prop:heegner-non-triv}
For $n \gg 0$, $\A_n \neq 0$. In particular, $\mathcal{A}_\infty \neq 0$.
\end{proposition}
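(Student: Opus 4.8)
The plan is to deduce non-triviality of $\mathcal{A}_n$ from the non-triviality of the Heegner classes $z_n$ themselves for $n \gg 0$. First I would invoke the known non-vanishing results for generalized Heegner classes in the anticyclotomic tower: by work of Cornut--Vatsal (in the form used by Castella--Hsieh \cite{CastellaHsieh}), the classes $z_n \in \Sel(K_n, T)$ are non-torsion for all sufficiently large $n$, and in particular non-zero modulo $\varpi$ for $n \gg 0$ after possibly passing far enough up the tower. More precisely, I would first establish that the image of $z_n$ in $\Sel(K_n, T)/\varpi \hookrightarrow \Sel(K_n, A_1)$ is non-zero for $n \gg 0$; this image generates the $R_n$-module $\mathcal{A}_n$ by Definition \ref{def:heegner-modules} (with $m = 1$), so $\mathcal{A}_n \neq 0$ follows immediately.

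The second step is to pass from $\mathcal{A}_n \neq 0$ to $\mathcal{A}_\infty \neq 0$. By Lemma \ref{lem:heegner-in-selmer}, the natural maps $\mathcal{A}_{1,n} \hookrightarrow \mathcal{A}_{1,n+1}$ are injective with image $\mathcal{A}_{1,n+1}^{\mathrm{Gal}(K_{n+1}/K_n)}$, so the $\mathcal{A}_{1,n}$ form an increasing system inside $\Sel(K_\infty, A_1)$; since one term in this system is non-zero, the colimit $\varinjlim_n \mathcal{A}_{1,n}$ is non-zero. It then suffices to observe that this colimit injects into $\mathcal{A}_\infty = \varinjlim_m \mathcal{A}_{m,m} \subset \Sel(K_\infty, A)$: indeed $\mathcal{A}_\infty[\varpi] \supseteq \varinjlim_n \mathcal{A}_{1,n}$ because each $\mathcal{A}_{1,n} = \mathcal{A}_{1,n}$ sits inside $\mathcal{A}_{m,m}$ for $m \geq 1$ and $m \geq n$ (compatibly with the transition maps), and a non-trivial $\varpi$-torsion submodule forces $\mathcal{A}_\infty \neq 0$. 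This uses only the definitions and the control theorem (Theorem \ref{thm:control}) already in hand.

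The main obstacle is the first step: one genuinely needs an input asserting that the Heegner classes $z_n$ do not all vanish, and this is \emph{not} a formal consequence of the constructions in the excerpt — it rests on a non-vanishing theorem for the associated $p$-adic $L$-function (Bertolini--Darmon--Prasanna / Brakočević / Hsieh), or equivalently on Cornut--Vatsal-type non-triviality of Heegner points in the tower, transported to the setting of generalized Heegner cycles attached to higher-weight modular forms. Concretely I would cite the non-vanishing of $\mathcal{L}(f/K)$ modulo $\varpi$ from \cite{Hsieh-Doc} (already used in the proof of Theorem \ref{thm:mu-vanishes}) together with the explicit reciprocity law relating $\mathcal{L}(f/K)$ to the images of the $z_n$ under a Perrin-Riou-type regulator map; non-vanishing of the $L$-value modulo $\varpi$ then forces some $z_n$ to be non-zero modulo $\varpi$, which is exactly what the argument above needs. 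A secondary technical point, easily handled, is checking that the regulator/explicit-reciprocity map is compatible with the $R_{1,n}$-module structures so that non-vanishing of $z_n$ mod $\varpi$ really does yield $\mathcal{A}_n \neq 0$ rather than just $\mathcal{A}_n \otimes (\text{something}) \neq 0$.
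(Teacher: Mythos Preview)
Your approach is essentially the same as the paper's. The paper's proof is two sentences: it cites the explicit reciprocity law of Castella--Hsieh (their Theorem~4.9) relating the $p$-adic logarithm of the $z_n$ to values of the Bertolini--Darmon--Prasanna $p$-adic $L$-function $\mathcal{L}(f/K)$, and then invokes Hsieh's non-vanishing result (Theorem~C of \cite{Hsieh-Doc}) to conclude. This is exactly the ``concrete'' route you describe at the end of your proposal; your additional discussion of the formal passage from $\mathcal{A}_n\neq 0$ to $\mathcal{A}_\infty\neq 0$ is correct but left implicit in the paper. One small correction: the initial appeal to Cornut--Vatsal is not quite the right attribution in this higher-weight setting --- the mod-$p$ non-triviality you need comes from Hsieh (and, under the generalized Heegner hypothesis, Burungale), as you yourself recognize when you make the argument concrete.
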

\begin{proof}
The $p$-adic logarithm of the generalized Heegner cycles $z_n$ interpolate special values of the Bertolini--Darmon--Prasanna $p$-adic $L$-function $\mathcal{L}(f/K)$ by \cite[Theorem 4.9]{CastellaHsieh}. Thus, the result follows from \cite[Theorem C]{Hsieh-Doc}.
\end{proof}

\begin{remark} In \cite{Hsieh-Doc}, Hsieh works with imaginary quadratic fields $K/\Q$ satisfying the same Heegner hypothesis as in the present paper. The non-triviality of Heegner cycles mod $p$ has also been studied by Burungale in the setting where the generalized Heegner hypothesis is satisfied; see e.g. \cite{ashay2}.
\end{remark}

The proof of the next proposition is similar to that of \cite[Proposition 4.7]{LongoVigni2}.

\begin{proposition}
The $R_n$-module $\fA_n$ is cyclic.
\end{proposition}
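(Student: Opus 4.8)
The plan is to show that $\fA_n = \mathcal{A}_\infty^{\Gamma_n}[\varpi]$ is generated over $R_n = \kappa[G_n]$ by a single element, namely the image of the Heegner class $z_n$. First I would recall that by Definition~\ref{def:heegner-modules} and Lemma~\ref{lem:heegner-in-selmer}, the module $\mathcal{A}_n = \mathcal{A}_{1,n}$ is the $R_n$-submodule of $\Sel(K_n, T_1)$ generated by the single class $z_n \bmod \varpi$, and that under the injection $\mathcal{A}_n \hookrightarrow \mathcal{A}_{n+1}$ its image is exactly $\mathcal{A}_{n+1}^{\mathrm{Gal}(K_{n+1}/K_n)}$. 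Passing to the direct limit over $m$ in the definition of $\mathcal{A}_\infty$ and using that these transition maps are $\Gamma$-equivariant, one identifies $\mathcal{A}_\infty^{\Gamma_n}[\varpi]$ with the image of $\mathcal{A}_n$ inside $\mathcal{A}_\infty$; that is, $\fA_n \cong \mathcal{A}_n$ as $R_n$-modules. Since $\mathcal{A}_n$ is by construction cyclic — generated by the reduction of $z_n$ — this gives the claim.

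The key steps in order: (1) unwind the definition $\fA_n = \mathcal{A}_\infty^{\Gamma_n}[\varpi^m]$ with $m=1$ and the colimit $\mathcal{A}_\infty = \varinjlim_m \mathcal{A}_{m,m}$; (2) use the control theorem (Theorem~\ref{thm:control}) together with Lemma~\ref{lem:heegner-in-selmer} to match up $\Gamma_n$-invariants of the limit with the finite-level module $\mathcal{A}_n$, taking care that taking $\Gamma_n$-invariants commutes with the relevant direct limit (direct limits are exact, and $\Gamma_n$-cohomology is computed via the exact inflation–restriction sequences already invoked in the proof of Theorem~\ref{thm:control}); (3) observe that the $\varpi$-torsion of $\mathcal{A}_\infty^{\Gamma_n}$ lands inside level-$n$ cohomology with $T_1$-coefficients, again by the control theorem; (4) conclude from $\mathcal{A}_n = R_n \cdot (z_n \bmod \varpi)$ that $\fA_n$ is cyclic.

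The main obstacle is step (2)–(3): one has to be careful that passing to $\Gamma_n$-invariants and $\varpi$-torsion in the inverse/direct limits defining $\mathcal{A}_\infty$ and $\mathcal{H}_\infty$ does not introduce extra elements not coming from the finite level $n$, i.e. that the natural map $\mathcal{A}_n \to \mathcal{A}_\infty^{\Gamma_n}[\varpi]$ is surjective and not merely injective. This is exactly where the control theorem is doing real work — it guarantees $\Sel(K_n, A_1) \xrightarrow{\sim} \Sel(K_\infty, A)[\varpi]^{\Gamma_n}$, and one transports this isomorphism to the Heegner submodules using the compatibility of the $z_n$ under restriction (the three-term relation~\eqref{eq:three-term-relation} is not needed here, only functoriality of the construction). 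Once that identification is in hand, cyclicity is formal. I would expect this proof to parallel closely the corresponding argument in \cite[Section 7]{Bertolini-Bordeaux} and \cite[Section 5]{HLV}, and would refer the reader there for the details common to both settings.
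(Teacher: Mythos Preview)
Your approach differs substantially from the paper's, and there is a genuine gap at exactly the point you flag as the main obstacle. The paper does not try to identify $\fA_n$ with $\A_n$. Instead it passes to the Pontryagin dual $\Hc_\infty = \A_\infty^\wedge$: the surjections $R_{m,m}\twoheadrightarrow \A_{m,m}$ (sending $1\mapsto z_m$) dualize to injections $\A_{m,m}^\wedge \hookrightarrow R_{m,m}^\wedge \simeq R_{m,m}$, and in the limit one gets $\Hc_\infty \hookrightarrow \Lambda$. Together with $\A_\infty\neq 0$ this forces $\Hc_\infty$ to be free of rank one over $\Lambda$, and then a purely formal duality computation yields
\[
\fA_n=(\Hc_\infty^\wedge)^{\Gamma_n}[\varpi]\simeq(\Hc_\infty/\varpi)_{\Gamma_n}\simeq\kappa[\Gamma]_{\Gamma_n}\simeq R_n
\]
for \emph{every} $n$. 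So cyclicity drops out of the global structure of $\Hc_\infty$, with no level-by-level matching of Heegner modules.

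The problem with your route is that Theorem~\ref{thm:control} gives an isomorphism $\Sel(K_n,A_1)\xrightarrow{\sim}\Sel(K_\infty,A)^{\Gamma_n}[\varpi]$ of the \emph{full} Selmer groups, but it says nothing about how this isomorphism interacts with the Heegner submodules sitting inside them. To get surjectivity of $\A_n\to \A_\infty^{\Gamma_n}[\varpi]$ you would need to know that any $\Gamma_n$-invariant, $\varpi$-torsion element arising from the $R_{m,m}$-span of $z_m$ (for some large $m$) already lies in the $R_n$-span of $z_n$; that is a compatibility statement about the Heegner classes themselves, not a consequence of control for Selmer groups, and ``functoriality of the construction'' does not supply it (if anything, the three-term relation \eqref{eq:three-term-relation} is precisely the nontrivial compatibility one would have to exploit). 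Indeed the identification $\fA_n\cong\A_n$ can fail outright: Proposition~\ref{prop:heegner-non-triv} only guarantees $\A_n\neq 0$ for $n\gg 0$, so $\A_n$ may well vanish for small $n$, whereas the paper's argument gives $\fA_n\cong R_n\neq 0$ for all $n$. The duality argument avoids this difficulty entirely.
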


\begin{proof}
Let $n \gg 0$ so that $\A_{n,n} \neq 0$. Then since it is a cyclic $R_n$-module, we have a natural surjection $R_{n,n} \rightarrow \A_{n,n}$ so by duality we have an injection
\[
\A_{n,n}^\wedge \hookrightarrow R_{n,n}^\wedge  \simeq (\cO / \varpi^n \cO)[G_n].
\]

Taking limits, we obtain an injection $\Hc_\infty \hookrightarrow \Lambda$. Since $\Lambda$ is Noetherian, this implies $\Hc_\infty$ is a finitely-generated and torsion-free $\Lambda$-module of rank at most $1$. Since $\Lambda$ is a domain, torsion-free implies free, and our assumption that $\A_\infty \neq 0$ then implies that $\Hc_\infty$ has rank $1$.

Thus,
\begin{align*}
\fA_n &= \A_\infty^{\Gamma_n}[\varpi]\\
&= \left(\Hc_\infty^\wedge \right)^{\Gamma_n}[\varpi] \\
&\simeq \left(\Hc_\infty / \varpi\right)_{\Gamma_n} \\
&\simeq \left(\cO / \varpi \cO \right)[\Gamma]_{\Gamma_n}\\
&\simeq R_n
\end{align*}
which proves the claim.
\end{proof}

\subsection{Pairings and universal norms}

For each $m,n \in \N$ write $I_{m,n}$ for the augmentation ideal of $R_{m,n}$. The following result generalizes \cite[Proposition~6.3]{Bertolini-Bordeaux} to our current setting.

\begin{proposition} \label{prop:tatepairing}
Let $m,n\in\N$. There is a perfect pairing of Tate cohomology groups
\[
{\langle\cdot,\cdot\rangle}_{m,n} : \hat{H}^0\bigl(G_n,\Sel(K_n,A_m)\bigr)\times \hat{H}^{-1}\bigl(G_n,\Sel(K_n,A_m)\bigr)\longrightarrow I_{m,n}/I_{m,n}^2.
\]
\end{proposition}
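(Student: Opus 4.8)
The plan is to follow the blueprint of \cite[Proposition~6.3]{Bertolini-Bordeaux}, adapting it to the present setting where the Galois representation $T$ has rank $2$ and weight $k$. First I would recall the abstract source of the pairing: for any finite cyclic group $G_n$ of order $p^n$ and any $G_n$-module $M$, cup product together with the identification $\hat{H}^{-2}(G_n,\ZZ)\simeq G_n \simeq I_{m,n}/I_{m,n}^2$ (the latter isomorphism sending $\sigma-1$ to the class of $\sigma-1$, for a fixed generator) furnishes a pairing $\hat{H}^0(G_n,M)\times \hat{H}^{-1}(G_n,M)\to \hat{H}^{-2}(G_n,\ZZ)$ whenever we have a $G_n$-equivariant pairing $M\times M\to\ZZ$ (or into a suitable coefficient module). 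In our situation the relevant pairing is local Tate duality combined with the self-duality of $A_m$: since $V$ and $T$ are self-dual (as recorded in \S\ref{sec:comparing-invariants}), there is a perfect Galois-equivariant pairing $A_m\times A_m\to \mu_{p^\infty}$, whence, via the invariant map of local class field theory at each place, a pairing on the local cohomology groups $H^1(K_{n,v},A_m)$. The Selmer group $\Sel(K_n,A_m)$ is its own orthogonal complement under the resulting global pairing because the Greenberg local conditions $H^1_f$ at $v\nmid p$ and $H^1_{\Gr}=H^1_f$ at $v\mid p$ are isotropic for local Tate duality (this is the standard fact that the ordinary/Greenberg condition is self-orthogonal; it uses the self-duality of the filtration $\mathcal{F}^+A_m$, i.e.\ $\mathcal{F}^+$ and $\mathcal{F}^-$ are exchanged by the duality up to twist).

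With these inputs in place, the construction is essentially formal. I would take two cocycles representing classes in $\hat{H}^0(G_n,\Sel(K_n,A_m))$ and $\hat{H}^{-1}(G_n,\Sel(K_n,A_m))$ respectively, cup them using the self-duality pairing on $\Sel(K_n,A_m)$ just described to land in $\hat{H}^{-1}(G_n,\ZZ/p^m)$ — or rather, after feeding through the global duality, in a group canonically isomorphic to $I_{m,n}/I_{m,n}^2$ — and check that this is well defined on Tate cohomology classes and independent of the choices. The perfectness of ${\langle\cdot,\cdot\rangle}_{m,n}$ then reduces to the perfectness of the underlying self-duality pairing on $\Sel(K_n,A_m)$ together with the standard fact that cup product into $\hat{H}^{-2}(G_n,\ZZ)$ induces a perfect pairing between $\hat{H}^0$ and $\hat{H}^{-1}$ of a finite module over a finite cyclic group (Tate cohomology of cyclic groups is $2$-periodic and the cup-product pairings are perfect). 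Concretely, $\hat{H}^0(G_n,\Sel)=\Sel^{G_n}/N_{G_n}\Sel$ and $\hat{H}^{-1}(G_n,\Sel)=\ker(N_{G_n})/(\sigma-1)\Sel$, and these are finite and Pontryagin dual to one another via the construction.

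The key steps, in order, are: (1) record the perfect Galois-equivariant self-duality pairing $A_m\times A_m\to\mu_{p^m}$ coming from our normalization of $T$; (2) deduce from local Tate duality and the isotropy of the Greenberg local conditions that $\Sel(K_n,A_m)$ pairs perfectly with itself (this is where \eqref{eq:local-at-p}, \eqref{eq:local-at-ell} and the fact that $H^1_f$, $H^1_{\Gr}$ are exact annihilators get used; one also needs $H^0(K_{n,v},\mathcal F^-A)=0$ from \cite[Lemma 1.8]{ChidaHsieh} so that there is no ambiguity at $v\mid p$); (3) invoke the $2$-periodicity of Tate cohomology of the cyclic group $G_n$ and the general cup-product formalism to build ${\langle\cdot,\cdot\rangle}_{m,n}$ and identify the target with $I_{m,n}/I_{m,n}^2$ via $\sigma-1\mapsto(\sigma-1)\bmod I_{m,n}^2$; (4) verify perfectness by combining (2) and (3). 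I expect the main obstacle to be step (2): one must be careful that the Selmer group defined with a fixed finite set $\Sigma$ of bad places really is self-orthogonal under the global Poitou--Tate pairing, i.e.\ that the local conditions at every $v\in\Sigma$ (including $v\mid p$, where the condition is $H^1_{\Gr}=H^1_f$) are their own annihilators, and that enlarging $\Sigma$ does not change the Selmer group — this last point is exactly the content of the control-theoretic analysis already carried out in \S\ref{sec:comparing-invariants} (the kernels computed in the proof of Theorem~\ref{thm:control} vanish), so it can be cited rather than redone. Once this orthogonality is secured, the rest is the same formal Tate-cohomology argument as in \cite[Proposition~6.3]{Bertolini-Bordeaux}, and I would refer the reader there and to \cite[Section 5]{HLV} for the remaining details.
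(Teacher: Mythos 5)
Your proposal is correct and follows essentially the same route as the paper: the argument is assembled from the formal Tate-cohomology / cup-product machinery of Bertolini (and its restatement in the supersingular setting in the companion paper), plus the two arithmetic inputs you single out — the Galois-equivariant self-duality of $A_m$ and the self-orthogonality of the Greenberg local conditions under local Tate duality — which the paper attributes to the duality framework of Chida--Hsieh. The paper's own proof is exactly as terse as yours and isolates the same non-formal obstacle (the orthogonality statement, which it realizes by extending a theorem of Bertolini--Darmon via Chida--Hsieh's results), so there is nothing substantively different to flag.
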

\begin{proof}
This proof is almost entirely formal, relying on arguments regarding abstract Galois cohomology, as in the proofs of \cite[Proposition~6.3]{Bertolini-Bordeaux} and \cite[Proposition 5.1]{HLV}. One generalizes from the elliptic curve setting to the modular form setting by using the appropriate version of local Tate duality as described in \cite[\S1.3]{ChidaHsieh}. The only part that is not entirely formal is the extension of \cite[Theorem 3.2]{BD} to our present setting, but this is also follows easily from the results of \cite{ChidaHsieh}; in particular, see \cite[Proposition 6.8]{ChidaHsieh}.
\end{proof}

Now we will reinterpret this pairing by calculating the Tate cohomology groups.

\begin{lemma}
We have an isomorphism
\[
\varinjlim_n \varinjlim_m \hat{H}^{-1}\bigl(G_n,\Sel(K_n,A_m)\bigr) \simeq \Sel(K_\infty, A)_\Gamma,
\]
where the subscript denotes $\Gamma$-coinvariants.
\end{lemma}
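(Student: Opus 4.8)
The plan is to compute the colimit on the left by first identifying the inner colimit over $m$, then the outer colimit over $n$. For fixed $n$, since $G_n$ is a finite cyclic group, $\hat{H}^{-1}(G_n, \Sel(K_n,A_m))$ is the kernel of the norm map $N_{G_n}$ on $\Sel(K_n,A_m)$ modulo the augmentation-ideal submodule $I_{G_n}\cdot \Sel(K_n,A_m)$. The key input is the control Theorem~\ref{thm:control}, which gives $\Sel(K_n,A_m) \simeq \Sel(K_\infty,A)[\varpi^m]^{\Gamma_n}$; passing to the colimit over $m$ (with respect to the maps induced by $A_m \hookrightarrow A_{m+1}$) yields $\varinjlim_m \Sel(K_n,A_m) \simeq \Sel(K_\infty,A)^{\Gamma_n}$, since $A = \varinjlim_m A_m$ and Galois cohomology of the profinite groups involved commutes with this direct limit. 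Because Tate cohomology of a fixed finite group is an exact functor shift that commutes with filtered colimits of coefficient modules, we get
\[
\varinjlim_m \hat{H}^{-1}\bigl(G_n,\Sel(K_n,A_m)\bigr) \simeq \hat{H}^{-1}\bigl(G_n, \Sel(K_\infty,A)^{\Gamma_n}\bigr).
\]

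Next I would take the colimit over $n$. Writing $S = \Sel(K_\infty,A)$, the transition maps in $n$ are the restriction maps $S^{\Gamma_n} \hookrightarrow S^{\Gamma_{n+1}}$, which on Tate cohomology $\hat{H}^{-1}(G_n, S^{\Gamma_n}) \to \hat{H}^{-1}(G_{n+1}, S^{\Gamma_{n+1}})$ are the maps fitting into the standard inflation-type behaviour for the tower of finite quotients $G_n$ of $\Gamma$. The claim is that
\[
\varinjlim_n \hat{H}^{-1}\bigl(G_n, S^{\Gamma_n}\bigr) \simeq S_\Gamma.
\]
For each $n$, $\hat{H}^{-1}(G_n, S^{\Gamma_n}) = \ker(N_{G_n} \colon S^{\Gamma_n} \to S^{\Gamma_n}) / I_{G_n}S^{\Gamma_n}$, but the target of $N_{G_n}$ lands in $S^{\Gamma}$ (the $G_n$-invariants of $S^{\Gamma_n}$), and since $\Gamma$ has no nontrivial finite quotient torsion obstructing divisibility here, on the colimit the norm maps become trivial: an element of $S^{\Gamma_n}$ is killed by $N_{G_{n'}}$ for $n' \gg n$ because $\Gamma_n/\Gamma_{n'}$ acts through a $p$-group on a $p$-primary divisible-by-$\varpi^m$ module and the norm factors through successive augmentation ideals. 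Hence in the colimit the ``kernel of norm'' condition becomes vacuous, and $\varinjlim_n \hat{H}^{-1}(G_n, S^{\Gamma_n}) \simeq \varinjlim_n S^{\Gamma_n}/I_{G_n}S^{\Gamma_n}$. Finally $\varinjlim_n S^{\Gamma_n}/I_{G_n}S^{\Gamma_n} \simeq S/I_\Gamma S = S_\Gamma$, since $\bigcup_n S^{\Gamma_n} = S$ (every element of $S$ is fixed by some open subgroup $\Gamma_n$, as $S$ is discrete) and the augmentation ideals $I_{G_n}$ exhaust $I_\Gamma$ compatibly.

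The step I expect to be the main obstacle is the vanishing-of-norm-in-the-colimit argument: one must verify carefully that for $s \in S^{\Gamma_n}$ there exists $n' \geq n$ with $N_{G_{n'}}(s) \in I_{G_{n'}}S^{\Gamma_{n'}}$, so that $s$ represents a class in the colimit of the $\hat{H}^{-1}$'s, i.e.\ that the natural map from $\varinjlim_n \hat H^{-1}$ to $\varinjlim_n S^{\Gamma_n}/I_{G_n}S^{\Gamma_n}$ is both well-defined and surjective. This is where the $p$-group structure of $\Gamma_n/\Gamma_{n'}$ and the torsion of $A_m$ are used, together with the elementary observation that in a module over $R_{m,n'} = (\cO/\varpi^{m})[G_{n'}]$ with $G_{n'}$ a $p$-group, the norm element lies in a high power of the augmentation ideal. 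One then checks compatibility of all the identifications with the transition maps and with the direct limit over $m$; these are routine diagram chases using the functoriality in Theorem~\ref{thm:control}. I would also remark that this lemma is the exact analogue of the corresponding computation in \cite[Section 6]{Bertolini-Bordeaux} and \cite[Section 5]{HLV}, so much of the bookkeeping can be imported from there.
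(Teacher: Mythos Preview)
Your argument is correct and follows the same line as the paper, which simply cites \cite[Lemma~6.5]{Bertolini-Bordeaux}; you have essentially reconstructed that computation. One small clarification: the ``vanishing of the norm in the colimit'' step is simpler than you suggest---since $S=\Sel(K_\infty,A)$ is $\varpi$-power torsion and $N_{G_{n'}}\big|_{S^{\Gamma_n}}=p^{\,n'-n}\cdot N_{G_n}$ for $n'\ge n$, any $x\in S^{\Gamma_n}$ satisfies $N_{G_{n'}}x=0$ once $p^{\,n'-n}$ kills $N_{G_n}x\in S^{\Gamma}$, so no appeal to powers of the augmentation ideal is needed.
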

\begin{proof}
The proof is identical to the proof of \cite[Lemma 6.5]{Bertolini-Bordeaux}.
\end{proof}

We  define the \textit{universal norm submodule} of $\Sel(K, T).$ by
\[
US(K, T)= \bigcap_{n \geq 1}\mathrm{cores}_{K_n/K}\Sel(K_n, T).
\]

\begin{lemma}
We have an identification
\[
\varprojlim_n \varprojlim_m \hat{H}^0\bigl(G_n,\Sel(K_n,A_m)\bigr) \simeq \Sel(K, T) / US(K, T).
\]
\end{lemma}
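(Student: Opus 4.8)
The plan is to compute the inverse limit of Tate cohomology groups $\hat H^0(G_n, \Sel(K_n, A_m))$ directly, using the description of $\hat H^0$ as a quotient of fixed points by norms together with the control theorem (Theorem~\ref{thm:control}) to pass between the $A_m$ and $T$ levels. Recall that for the finite cyclic group $G_n$ acting on a module $X$, one has $\hat H^0(G_n, X) = X^{G_n}/N_{G_n} X$, where $N_{G_n}$ is the norm (corestriction) element. So the first step is to identify $\Sel(K_n, A_m)^{G_n}$ with $\Sel(K, A_m)$ via Theorem~\ref{thm:control} (with $m' = m$, $n' = n$, and the base field $K = K_0$), and then to recognize the norm submodule $N_{G_n}\Sel(K_n, A_m) \subseteq \Sel(K, A_m)$ as the image of corestriction from level $n$.

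Next I would take the limits in the correct order. Taking $\varprojlim_m$ first: since $\Sel(K_n, T) = \varprojlim_m \Sel(K_n, A_m)$ by definition, and the transition maps in $m$ are surjective on the relevant finite-type pieces (so the relevant $\varprojlim^1$ terms vanish), one obtains $\varprojlim_m \hat H^0(G_n, \Sel(K_n, A_m)) \simeq \Sel(K_n, T)^{G_n}/N_{G_n}\Sel(K_n, T)$. Using the control theorem again in the limit over $m$ (Lemma~\ref{lem:heegner-in-selmer} already records the relevant compatibilities; more directly, the isomorphism $\Sel(K_n,A_m) \simeq \Sel(K_{n'},A_m)[\varpi^m]^{\Gal}$ passes to the limit), we identify $\Sel(K_n, T)^{G_n} \simeq \Sel(K, T)$. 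Hence $\varprojlim_m \hat H^0(G_n, \Sel(K_n, A_m)) \simeq \Sel(K, T)/\mathrm{cores}_{K_n/K}\Sel(K_n, T)$. Then taking $\varprojlim_n$: the transition maps in $n$ are the natural ones, and since $\bigcap_{n} \mathrm{cores}_{K_n/K}\Sel(K_n, T) = US(K, T)$ by definition, and the quotients $\Sel(K,T)/\mathrm{cores}_{K_n/K}\Sel(K_n,T)$ form a surjective (hence Mittag-Leffler) inverse system, the limit is $\Sel(K, T)/US(K, T)$, as claimed.

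I would structure the write-up by first stating the general identity $\hat H^0(G_n, X) = X^{G_n}/N_{G_n}X$ for cyclic $G_n$, then invoking Theorem~\ref{thm:control} to rewrite the fixed-point part, then noting that under this identification the norm element on $X^{G_n}$ corresponds to corestriction $\Sel(K_n, \cdot) \to \Sel(K, \cdot)$ (this is the standard fact that the norm map $X^{G_n} \to X^{G_n}$ factors through corestriction when $X^{G_n} \simeq$ the base-level object), and finally passing to the two limits and recognizing $US(K,T)$.

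The main obstacle I expect is the careful bookkeeping with the limits and, in particular, verifying the vanishing of the $\varprojlim^1$ terms so that the cohomology limit commutes with the quotient: one needs that the inverse systems $\{\Sel(K_n, A_m)^{G_n}\}_m$ and $\{N_{G_n}\Sel(K_n, A_m)\}_m$ are Mittag-Leffler, which follows because the former stabilizes to $\Sel(K, A_m) = \Sel(K,T)[\varpi^m]$ (finite type, with the surjectivity of $\varpi$-transition maps coming from $A$ being $\varpi$-divisible) and the latter is a submodule of it. A secondary point requiring care is the precise identification of the norm submodule with the image of corestriction; this is standard but should be spelled out using the commutative diagram relating the restriction isomorphism of Theorem~\ref{thm:control}, the norm element of $\ZZ[G_n]$, and the corestriction map, exactly as in the proof of the analogous statement \cite[Lemma~6.6]{Bertolini-Bordeaux}. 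In fact, given how closely this mirrors Bertolini's argument, the cleanest exposition is likely to reduce to \cite[Lemma~6.6]{Bertolini-Bordeaux} after substituting Theorem~\ref{thm:control} for the control theorem used there, just as was done for the preceding lemma.
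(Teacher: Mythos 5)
Your proposal is correct and is essentially the approach the paper takes: the paper's proof is the one-line citation ``the proof of \cite[Lemma 6.6]{Bertolini-Bordeaux} holds verbatim,'' and what you have written is precisely a spelled-out version of Bertolini's argument (Tate $\hat H^0$ as fixed points modulo norms, identification of fixed points via the control theorem, identification of the norm submodule with the image of corestriction, and then taking the two inverse limits using finiteness of the groups at each finite level to dispense with $\varprojlim^1$ issues). Your concluding observation that the cleanest write-up reduces to Bertolini's Lemma 6.6 after substituting Theorem~\ref{thm:control} is exactly what the paper does.
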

\begin{proof}
The proof of \cite[Lemma 6.6]{Bertolini-Bordeaux} holds verbatim.
\end{proof}

Putting this all together, we obtain the following.

\begin{theorem}\label{thm:perfect-pairing}
There exists a perfect pairing
\[
\langle\! \langle - , - \rangle\! \rangle \colon \Sel(K, T) / US(K, T) \times \Sel(K_\infty, A)_\Gamma \rightarrow \Gamma \otimes_{\cO} \mathfrak{F} / \cO.
\]
\end{theorem}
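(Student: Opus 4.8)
The plan is to assemble Theorem~\ref{thm:perfect-pairing} by taking a double limit of the perfect pairings of Tate cohomology groups constructed in Proposition~\ref{prop:tatepairing} and then identifying each of the three factors in the limit with the modules appearing in the statement. First I would apply Proposition~\ref{prop:tatepairing} at each level $(m,n)$ to obtain the perfect pairings
\[
\langle\cdot,\cdot\rangle_{m,n}\colon \hat H^0\bigl(G_n,\Sel(K_n,A_m)\bigr)\times \hat H^{-1}\bigl(G_n,\Sel(K_n,A_m)\bigr)\longrightarrow I_{m,n}/I_{m,n}^2.
\]
The two cohomology factors vary functorially with $m$ and $n$: the $\hat H^{-1}$-terms form a directed system under restriction/inclusion maps, while the $\hat H^{0}$-terms form an inverse system under corestriction and multiplication-by-$\varpi$. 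Perfectness of each $\langle\cdot,\cdot\rangle_{m,n}$ means each factor is the Pontryagin dual of the other over $R_{m,n}/I_{m,n}\cong\cO/\varpi^m$, and this duality is compatible with the transition maps (restriction being dual to corestriction under Tate duality), so passing to $\varprojlim_n\varprojlim_m$ in the first variable and $\varinjlim_n\varinjlim_m$ in the second yields a perfect pairing of the limiting modules. The target $\varprojlim_n\varprojlim_m I_{m,n}/I_{m,n}^2$ is identified with $\Gamma\otimes_{\cO}\mathfrak F/\cO$ in the standard way: $I_{m,n}/I_{m,n}^2\cong G_n\otimes\cO/\varpi^m$ for the augmentation ideal of a group ring, and the inverse limit over $n$ of $G_n$ gives $\Gamma$, while the direct limit over $m$ of $\cO/\varpi^m$ on the target side (dually) produces $\mathfrak F/\cO$; this is exactly the computation carried out in \cite[\S6]{Bertolini-Bordeaux}.

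Next I would invoke the two preceding lemmas to rewrite the two sides of the limiting pairing. The lemma immediately before Theorem~\ref{thm:perfect-pairing} identifies
\[
\varprojlim_n\varprojlim_m \hat H^0\bigl(G_n,\Sel(K_n,A_m)\bigr)\simeq \Sel(K,T)/US(K,T),
\]
and the lemma before that identifies
\[
\varinjlim_n\varinjlim_m \hat H^{-1}\bigl(G_n,\Sel(K_n,A_m)\bigr)\simeq \Sel(K_\infty,A)_\Gamma.
\]
Substituting these two identifications and the computation of the target into the limiting perfect pairing yields precisely
\[
\langle\!\langle -,-\rangle\!\rangle\colon \Sel(K,T)/US(K,T)\times \Sel(K_\infty,A)_\Gamma\longrightarrow \Gamma\otimes_{\cO}\mathfrak F/\cO,
\]
as claimed. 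As the paper's repeated invocations make clear, the structural skeleton of this argument is Bertolini's, so the cleanest write-up refers to \cite[Section~6]{Bertolini-Bordeaux} and \cite[Section~5]{HLV} for the formal manipulation of the limits.

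The main obstacle is checking that perfectness genuinely survives both limiting processes. Pontryagin duality is exact and commutes with direct and inverse limits of the appropriate flavor (direct limits of discrete modules dualize to inverse limits of compact ones and vice versa), but one must verify that the transition maps in the $\hat H^0$ and $\hat H^{-1}$ systems are honestly dual to each other under the pairings $\langle\cdot,\cdot\rangle_{m,n}$—i.e.\ that corestriction on one side is adjoint to restriction on the other with respect to the family of local Tate pairings, and that the induced maps on $I_{m,n}/I_{m,n}^2$ are the expected ones. This compatibility is exactly the content built into the construction of the pairings in Proposition~\ref{prop:tatepairing} (via the generalization of \cite[Theorem~3.2]{BD} and \cite[Proposition~6.8]{ChidaHsieh}), so once that is in hand the passage to the limit is formal; I would therefore spend most of the written proof recording the compatibility of $\langle\cdot,\cdot\rangle_{m,n}$ with the transition maps and then cite \cite{Bertolini-Bordeaux, HLV} for the remaining bookkeeping.
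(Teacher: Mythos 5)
Your proposal is correct and follows essentially the same route as the paper: the paper gives no separate proof of Theorem~\ref{thm:perfect-pairing} beyond the phrase ``Putting this all together,'' meaning precisely the combination of Proposition~\ref{prop:tatepairing} with the two preceding lemmas identifying $\varinjlim\varinjlim \hat H^{-1}$ with $\Sel(K_\infty,A)_\Gamma$ and $\varprojlim\varprojlim\hat H^0$ with $\Sel(K,T)/US(K,T)$, together with the standard computation of $\varprojlim_n\varinjlim_m I_{m,n}/I_{m,n}^2\simeq\Gamma\otimes_\cO\mathfrak F/\cO$. Your additional remarks on the duality between restriction and corestriction and the compatibility of the pairings with the transition maps accurately spell out the bookkeeping that the paper delegates to \cite{Bertolini-Bordeaux} and \cite{HLV}.
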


As a consequence of this theorem, we obtain the following useful corollary.

\begin{corollary}\label{cor:no-finite-findex-if-torsion-free}
The $\Lambda$-module $\Sel(K_\infty,A)$ admits no proper $\Lambda$-submodule of finite index if and only if $\Sel(K, T) / US(K, T)$ is $\cO$-torsion-free.
\end{corollary}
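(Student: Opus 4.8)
The plan is to deduce the corollary directly from the perfect pairing in Theorem~\ref{thm:perfect-pairing}. Recall the standard fact that a finitely generated $\Lambda$-module $X$ admits no proper $\Lambda$-submodule of finite index if and only if $X_\Gamma = X/(\gamma-1)X$ is a free $\cO$-module (equivalently, $\cO$-torsion-free), where $\gamma$ is a topological generator of $\Gamma$; this follows from Nakayama's lemma applied to the maximal ideal $\mathfrak{m} = (\varpi, \gamma-1)$ of $\Lambda$, since $X$ has a proper submodule of finite index precisely when $X$ surjects onto a nonzero finite $\Lambda$-module, which happens precisely when $X_\Gamma$ has a nonzero finite quotient, i.e. when $X_\Gamma$ has nonzero $\cO$-torsion. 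Applying this with $X = \Sel(K_\infty,A)^\wedge$, the condition we want becomes: $\bigl(\Sel(K_\infty,A)^\wedge\bigr)_\Gamma$ is $\cO$-torsion-free.

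Next I would translate this coinvariants statement through Pontryagin duality: for a cofinitely generated discrete $\Lambda$-module $\Sel(K_\infty,A)$, we have $\bigl(\Sel(K_\infty,A)^\wedge\bigr)_\Gamma \cong \bigl(\Sel(K_\infty,A)^\Gamma\bigr)^\wedge$ — dualizing interchanges invariants and coinvariants. Wait; one must be slightly careful here, since the pairing in Theorem~\ref{thm:perfect-pairing} involves $\Sel(K_\infty,A)_\Gamma$, the coinvariants of the discrete module, not the invariants. So instead I would argue as follows: the perfect pairing of Theorem~\ref{thm:perfect-pairing} identifies $\Sel(K_\infty,A)_\Gamma$ as the Pontryagin dual of $\Sel(K,T)/US(K,T)$, up to the twist by $\Gamma \otimes_\cO \mathfrak{F}/\cO \cong \mathfrak{F}/\cO$; hence $\bigl(\Sel(K_\infty,A)_\Gamma\bigr)^\wedge \cong \Sel(K,T)/US(K,T)$ as $\cO$-modules. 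On the other hand, $\bigl(\Sel(K_\infty,A)_\Gamma\bigr)^\wedge \cong \bigl(\Sel(K_\infty,A)^\wedge\bigr)^\Gamma$, the $\Gamma$-invariants of the compact dual. Thus $\Sel(K,T)/US(K,T)$ is $\cO$-torsion-free if and only if $\bigl(\Sel(K_\infty,A)^\wedge\bigr)^\Gamma$ is $\cO$-torsion-free. Finally, since $\Sel(K_\infty,A)^\wedge$ is a finitely generated $\Lambda$-module, its $\Gamma$-invariants are $\cO$-torsion-free if and only if its $\Gamma$-coinvariants are (both are controlled by the same elementary divisors relative to $\gamma - 1$, or more simply: $X^\Gamma$ and $X_\Gamma$ have the same $\cO$-torsion-free rank and the torsion of one vanishes iff the torsion of the other does, by the structure theory); and $X_\Gamma$ being $\cO$-torsion-free is exactly the no-finite-index-submodule criterion recalled above.

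An alternative, perhaps cleaner route avoids the subtle invariants/coinvariants comparison: apply the no-finite-index criterion in the form that $\Sel(K_\infty,A)^\wedge$ has no proper finite-index $\Lambda$-submodule iff $\Sel(K_\infty,A)^\wedge / (\gamma - 1)$ has no nonzero finite quotient, dualize this to the statement that $\Sel(K_\infty,A)^\Gamma$ has no nonzero finite submodule, and then relate $\Sel(K_\infty,A)^\Gamma$ to $\Sel(K_\infty,A)_\Gamma$ via the exact sequence $0 \to \Sel(K_\infty,A)^\Gamma \to \Sel(K_\infty,A) \xrightarrow{\gamma-1} \Sel(K_\infty,A) \to \Sel(K_\infty,A)_\Gamma \to 0$, noting that the divisible part of the discrete module contributes equally to both ends so that the finite parts match. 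In either approach, the main obstacle is the bookkeeping in passing between the discrete module $\Sel(K_\infty,A)$ and its compact dual, and between $\Gamma$-invariants and $\Gamma$-coinvariants: one must check that ``$\cO$-torsion-free'' is preserved under all these operations, which ultimately rests on the structure theorem for finitely generated $\Lambda$-modules together with the fact that the twisting module $\Gamma \otimes_\cO \mathfrak{F}/\cO$ is $\cO$-divisible and hence does not affect torsion-freeness. None of this is deep, but it is exactly the place where a careless argument could go wrong, so I would write it out carefully, likely citing the corresponding step in \cite[Section~7]{Bertolini-Bordeaux} and \cite[Section~5]{HLV} where the identical formal manipulation is carried out in the elliptic curve case.
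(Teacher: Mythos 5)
There is a genuine gap, and it occurs at precisely the algebraic step you flagged as the ``main obstacle.'' Your argument begins with a ``standard fact'' that is stated on the wrong side of Pontryagin duality and is, as written, false: a nonzero finitely generated $\Lambda$-module $X$ \emph{always} has a proper $\Lambda$-submodule of finite index, because $\Lambda$ is local with finite residue field $\kappa$, so $X\twoheadrightarrow X/\mathfrak{m}X\neq 0$ already exhibits one. (Take $X=\Lambda$: then $X_\Gamma=\cO$ is torsion-free, yet $\mathfrak{m}\subset\Lambda$ has index $|\kappa|$.) The error in your justification is the last equivalence: ``$X_\Gamma$ has a nonzero finite quotient iff $X_\Gamma$ has nonzero $\cO$-torsion'' is wrong, since $\cO$ has the finite quotient $\kappa$ but no torsion. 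The correct criterion is: for a finitely generated $\Lambda$-module $X$, $X$ has no nonzero finite $\Lambda$-\emph{submodule} iff $X^\Gamma$ is $\cO$-torsion-free (equivalently $X[\mathfrak{m}]=0$). This is the statement dual to ``the discrete module $\Sel(K_\infty,A)$ has no proper finite-index $\Lambda$-submodule,'' since Pontryagin duality turns finite-index submodules of the discrete module into nonzero finite submodules of the compact dual, \emph{not} finite-index submodules of the compact dual. Your proposal conflates these two conditions when you write that ``applying this with $X=\Sel(K_\infty,A)^\wedge$'' gives the condition we want.

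The second genuine error is the claim that ``$X^\Gamma$ is $\cO$-torsion-free iff $X_\Gamma$ is $\cO$-torsion-free'' for $X$ finitely generated over $\Lambda$. This fails: for $X=\Lambda/\varpi$ one has $X^\Gamma=0$ (trivially torsion-free) while $X_\Gamma=\kappa$ is torsion; for $X=\mathfrak{m}$ (which has no finite submodules) one has $X^\Gamma=0$ but $X_\Gamma$ contains a copy of $\kappa$. There is no structure-theoretic fact that makes these equivalent without extra hypotheses. Ironically, you had already reached the correct intermediate statement --- $\Sel(K,T)/US(K,T)$ is $\cO$-torsion-free iff $X^\Gamma=(\Sel(K_\infty,A)_\Gamma)^\wedge$ is $\cO$-torsion-free --- and should have stopped there and invoked the (correct) criterion $X^\Gamma$ torsion-free $\iff X$ has no nonzero finite $\Lambda$-submodule $\iff \Sel(K_\infty,A)$ has no proper finite-index $\Lambda$-submodule. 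Instead you took a wrong turn through $X_\Gamma$. Cleaner still is to stay entirely on the discrete side: the pairing of Theorem~\ref{thm:perfect-pairing} shows $\Sel(K,T)/US(K,T)$ is $\cO$-torsion-free iff $\Sel(K_\infty,A)_\Gamma$ is $\cO$-divisible, and a cofinitely generated discrete $\Lambda$-module $S$ has no proper finite-index $\Lambda$-submodule iff $S_\Gamma$ is $\cO$-divisible (any finite quotient of $S$ factors through $S_\Gamma$ after passing to $\Gamma$-coinvariants, and a cofinitely generated $\cO$-module has nonzero finite quotients iff it is not divisible). For the record, the paper itself does not reprove this; it cites \cite[Corollary~6.2]{Bertolini-Bordeaux} directly, which is exactly this formal argument.
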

\begin{proof}
See \cite[Corollary 6.2]{Bertolini-Bordeaux}.
\end{proof}

\subsection{Computation of universal norms}\label{subsec:universal-norm-com[utations}

In this section we will prove Theorem~\ref{thm:main}, which will follow from Corollary~\ref{cor:no-finite-findex-if-torsion-free} if we can show that $\Sel(K, T) / US(K, T)$ is torsion-free.

Recall that $R_n=R_{1,n}=(\cO/\varpi\cO)[G_n]$, that $\fA_n=\fA_{1,n}$, and that $A_1=  A[\varpi]$.

\begin{lemma} \label{lem:free-submodule-U}
If $\fA_n\ne0$, then $\Sel(K_n,A_1)$ admits  a free $R_n$-submodule $U_n$ such that $\fA_n\subset U_n$.
\end{lemma}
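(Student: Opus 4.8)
The plan is to build $U_n$ by lifting a carefully chosen generating set. Since $\fA_n \neq 0$ and $\fA_n$ is a cyclic $R_n$-module (by the proposition preceding Proposition \ref{prop:tatepairing}), it is isomorphic to $R_n/J$ for some ideal $J \subset R_n$. The key structural input is that $R_n = \kappa[G_n]$ where $G_n$ is a cyclic $p$-group, so $R_n$ is a local Artinian ring with maximal ideal $I_n$ (the augmentation ideal), and in fact $R_n \cong \kappa[t]/(t^{p^n})$. In particular every ideal of $R_n$ is a power of $I_n$, so $\fA_n \cong R_n/I_n^{d}$ for some $d$ with $1 \le d \le p^n$. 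The goal is to find an element $u \in \Sel(K_n, A_1)$ whose $R_n$-annihilator is trivial (i.e. such that $R_n \cdot u$ is free of rank one) and which contains $\fA_n$ inside $R_n \cdot u$; one then sets $U_n = R_n \cdot u$.

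First I would set up the comparison: by Lemma \ref{lem:heegner-in-selmer} (and the control theorem, Theorem \ref{thm:control}) we may view $\fA_n$ as sitting inside $\Sel(K_n, T_1) = \Sel(K_n, A_1)$ as the $R_n$-span of the reduction $\bar z_n$ of the Heegner class. Next I would use Tate/local duality to produce the ``dual'' element. The natural strategy, following \cite[proof of Lemma 6.7 or similar]{Bertolini-Bordeaux}, is to dualize: $\Sel(K_n, A_1)^\wedge = \mathcal X(f)/(\text{stuff})$ carries an $R_n$-action, and a surjection from $\Sel(K_n, A_1)$ onto $R_n$ (a free cyclic quotient) dualizes to give a free cyclic \emph{sub}module of the dual, and vice versa. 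Concretely, I would show that since $\fA_n$ is a \emph{nonzero} submodule of the $R_n$-module $\Sel(K_n,A_1)$, and since $R_n$ is a self-injective (Frobenius) ring — being a group algebra of a finite group over a field — the inclusion $\fA_n = R_n \bar z_n \hookrightarrow \Sel(K_n,A_1)$ factors through an injection into an $R_n$-injective hull, which for cyclic modules over the Frobenius ring $R_n$ is $R_n$ itself. Self-injectivity of $R_n$ is what guarantees that a nonzero cyclic submodule of \emph{any} $R_n$-module embeds into a free rank-one module sitting inside that same module: one extends the map $\fA_n \hookrightarrow \Sel(K_n,A_1)$ along $\fA_n \hookrightarrow R_n$ using injectivity of $R_n$ as an $R_n$-module, obtaining $R_n \to \Sel(K_n,A_1)$; the image is then $U_n$, it contains $\fA_n$, and it is free of rank one provided the map $R_n \to \Sel(K_n,A_1)$ is injective. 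Injectivity is automatic here because the composite $\fA_n \hookrightarrow R_n \to \Sel(K_n,A_1)$ agrees with the original inclusion on the \emph{socle} of $R_n$ (which lies in $\fA_n$, as $\fA_n = \Sel$-span of $\bar z_n$ contains $I_n^{p^n-1} \neq 0$ whenever $\fA_n \neq 0$ — here one uses that a nonzero submodule of $R_n$ always contains the simple socle), and a map out of a local Artinian ring whose kernel meets the socle trivially is injective.

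The main obstacle, and the step requiring the most care, is verifying that the extension map $R_n \to \Sel(K_n, A_1)$ obtained from injectivity is actually \emph{injective} — equivalently, that $\fA_n$ contains the socle $I_n^{p^n - 1}$ of $R_n$ under the chosen embedding $\fA_n \hookrightarrow R_n$. If $\fA_n$ were embedded non-faithfully this could fail; so the real content is that $\fA_n$, being the cyclic module generated by $\bar z_n$ inside $\Sel(K_n,A_1)$, is \emph{already} a faithful-enough quotient, or more precisely that its image under any $R_n$-embedding into $R_n$ is an ideal $I_n^d$ with $d \le p^n$, hence nonzero, hence contains the unique minimal ideal $I_n^{p^n-1}$. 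This is where one invokes that $\fA_n \ne 0$ as a hypothesis. I would therefore organize the argument as: (1) $R_n \cong \kappa[t]/(t^{p^n})$ is a local self-injective ring with simple socle; (2) $\fA_n \ne 0$ cyclic $\Rightarrow$ $\fA_n \cong R_n/I_n^{d}$ and any embedding $\fA_n \hookrightarrow R_n$ has image $I_n^{p^n - d} \supseteq \operatorname{socle}$; (3) injectivity of $R_n$ extends the inclusion $\fA_n \hookrightarrow \Sel(K_n,A_1)$ to $\phi\colon R_n \to \Sel(K_n,A_1)$; (4) $\ker\phi$ is an ideal not containing the socle, hence $\ker\phi = 0$; (5) set $U_n = \phi(R_n)$, which is free of rank one and contains $\fA_n$. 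I expect steps (2) and (4) — the socle bookkeeping — to be the delicate points, while the self-injectivity input in (1) and (3) is standard (group algebras over fields are Frobenius algebras), and I would reference the parallel argument in \cite[Section 7]{Bertolini-Bordeaux} and \cite[Section 5]{HLV} for the analogous elliptic-curve computation.
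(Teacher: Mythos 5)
The central step of your proposal does not go through. You invoke ``injectivity of $R_n$ as an $R_n$-module'' to extend the inclusion $\fA_n \hookrightarrow \Sel(K_n,A_1)$ along the embedding $\fA_n \hookrightarrow R_n$ to a map $\phi\colon R_n \to \Sel(K_n,A_1)$. But that is the wrong direction: injectivity of $R_n$ lets you extend maps \emph{into} $R_n$ along monomorphisms, i.e.\ given $A \hookrightarrow B$ and $A \to R_n$ you get $B \to R_n$. To extend a map $\fA_n \to \Sel(K_n,A_1)$ along $\fA_n \hookrightarrow R_n$ you would need $\Sel(K_n,A_1)$ to be an injective $R_n$-module, which is neither assumed nor established. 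Once the extension $\phi$ exists your socle bookkeeping in steps (2)--(4) is fine, but its existence is precisely the content that is missing.

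In fact the abstract statement your argument would prove---that over $R_n \cong \kappa[t]/(t^{p^n})$ any nonzero cyclic submodule of an arbitrary module lies inside a free rank-one submodule---is simply false. Take $R=\kappa[t]/(t^2)$, $M = R \oplus R/(t)$, and let $\fA$ be the cyclic submodule generated by $(t,\bar 1)$; then $\fA\cong R/(t)$ is nonzero, but a direct check shows that no element of $M$ with trivial annihilator has $(t,\bar 1)$ in its $R$-span, so $\fA$ is contained in no free rank-one submodule of $M$. So some arithmetic input beyond ``$\fA_n$ is a nonzero cyclic $R_n$-submodule of an $R_n$-module'' is genuinely required.

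The paper's route (following \cite[Lemma 5.5]{HLV} and ultimately Bertolini) supplies exactly this input: $U_n$ is not produced by an abstract lifting inside $\Sel(K_n,A_1)$, but is constructed as an explicit submodule of $\fA_{n+1}$, the Heegner module one level up the tower, taken invariant under $\mathrm{Gal}(K_{n+1}/K_n)$. It is the cyclic structure of $\fA_{n+1}$ as an $R_{n+1}$-module together with the norm-compatibility of the Heegner classes (the three-term relation \eqref{eq:three-term-relation} and Lemma \ref{lem:heegner-in-selmer}) that produces a free $R_n$-submodule sitting between $\fA_n$ and $\Sel(K_n,A_1)$. Your self-injectivity framing correctly identifies $R_n$ as a Frobenius algebra and correctly isolates the socle condition, but the crucial existence step cannot be replaced by a formal injectivity argument; it must come from the level-$(n+1)$ Heegner data.
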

\begin{proof}
This is proved in exactly the same way as \cite[Lemma 5.5]{HLV}; one must simply ignore the $\pm$ symbols and replace $\fE$ by $\fA$. The only detail which will be necessary later is that $U_n$ is actually defined as an appropriate submodule of $\fA_{n+1}$ which is invariant under $\mathrm{Gal}(K_{n+1}/K_n)$.
\end{proof}

Let us now recall the definition of the Shafarevich--Tate groups in our setting, as well as the definition of relative Shafarevich--Tate groups.

\begin{definition} Let $n \in \N \cup \{ \infty \}$.
Write $\D(K_n,A)$ for the maximal divisible subgroup of $\Sel(K_n,A)$. Then we define the Shafarevich--Tate group of $f$ over $K_n$ by
\[
\Sh(K_n,A)=\Sel(K_n,A)/\D(K_n,A).
\]
\end{definition}

Consider the tautological exact sequence
\[
0 \rightarrow \D(K_n,A) \rightarrow \Sel(K_n,A) \rightarrow \Sh(K_n,A) \rightarrow 0.
\]
Since $\D(K_n,A)$ is divisible, we have $\D(K_n,A)/\varpi^m =0$ for any $m \geq 1$, so taking $\varpi^m$-torsion induces a short exact sequence
\[
0 \rightarrow \D(K_n,A)[\varpi^m] \rightarrow \Sel(K_n,A_m) \rightarrow \Sh(K_n,A)[\varpi^m] \rightarrow 0,
\]
where the middle term is identified via Theorem  \ref{thm:control}. We will simplify notation and instead write
\[
0 \rightarrow \D(K_n,A_m) \rightarrow \Sel(K_n,A_m) \rightarrow \Sh(K_n,A_m) \rightarrow 0.
\]
Recall from Theorem \ref{thm:control} that we have an injection $
\Sel(K_n,A_m) \hookrightarrow \Sel(K_{n+1},A_m)
$, hence we also have an injection $\D(K_n,A_m) \hookrightarrow \D(K_{n+1},A_m)$. Thus, the natural restriction map $H^1(K_n,A_m) \rightarrow H^1(K_{n+1},A_m)$ induces a map
\[
\Sh(K_n,A_m) \rightarrow \Sh(K_{n+1},A_m).
\]

\begin{definition}
We define the relative Tate-Shafarevich groups
\[
\Sh(K_{n+1}/K_n,A_m)=\ker \left( \Sh(K_n,A_m) \rightarrow \Sh(K_{n+1},A_m) \right)
\]
where the map is induced by restriction.
\end{definition}
Following Bertolini \cite{Bertolini-Bordeaux}, we impose the following assumption.
\\
\\
\textbf{(Sha)} For all $n \geq 0$, we assume $\Sh(K_{n+1}/K_n,A_1)=0$.
\\
\\
Under this assumption, we may show that each submodule $U_n$ defined in Lemma~\ref{lem:free-submodule-U} is contained in $\mathcal{D}(K_n,A_1)$. For $n \in \N$, write $\mathcal{G}_n$ for $\mathrm{Gal}(K_{n+1}/K_n)$.

\begin{proposition}\label{prop:un-in-dn}
Suppose $\Sh(K_{n+1}/K_n,A_1)=0$ and $\fA_n
\neq 0$. Then $U_n$ is contained in $\D(K_n,A_1)$.
\end{proposition}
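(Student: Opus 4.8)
The plan is to follow the structure of the analogous argument in \cite[Section 5]{HLV}, adapting it to our setting. Recall from Lemma~\ref{lem:free-submodule-U} that $U_n$ is constructed as a $\mathcal{G}_n$-invariant free $R_n$-submodule sitting inside $\fA_{n+1}$, and that $\fA_n \subset U_n$. The key point is that $U_n \subseteq \Sel(K_n, A_1)$ while $\D(K_n, A_1)$ is the $\varpi$-torsion of the maximal divisible subgroup; so to show $U_n \subseteq \D(K_n, A_1)$ it suffices, by the tautological exact sequence
\[
0 \rightarrow \D(K_n,A_1) \rightarrow \Sel(K_n,A_1) \rightarrow \Sh(K_n,A_1) \rightarrow 0,
\]
to show that the image of $U_n$ in $\Sh(K_n, A_1)$ is trivial.

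The mechanism for this is the vanishing hypothesis \textbf{(Sha)}: $\Sh(K_{n+1}/K_n, A_1) = 0$ means that the restriction map $\Sh(K_n, A_1) \to \Sh(K_{n+1}, A_1)$ is injective. Thus it is enough to show that the image of $U_n$ in $\Sh(K_{n+1}, A_1)$ vanishes, i.e.\ that $U_n$, viewed inside $\Sel(K_{n+1}, A_1)$ via the restriction injection of Lemma~\ref{lem:heegner-in-selmer}, actually lands in $\D(K_{n+1}, A_1)$. Here one exploits the defining feature of $U_n$ recorded in Lemma~\ref{lem:free-submodule-U}: since $U_n$ is defined as a submodule of $\fA_{n+1} = \A_\infty^{\Gamma_{n+1}}[\varpi]$, its elements are corestrictions (universal norms) of Heegner classes coming from higher levels, hence are infinitely $\varpi$-divisible inside the limit; concretely, each element of $U_n$ lifts, compatibly, through multiplication by $\varpi$ inside $\Sel(K_{n+1}, A_m)$ for all $m$, which forces it into the maximal divisible subgroup $\D(K_{n+1}, A_1)$. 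This divisibility is where the Heegner machinery and the fact that $\A_\infty$ is a quotient of the free $\Lambda$-module $\Hc_\infty$ (so it is ``as divisible as possible'') enter.

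More precisely, I would argue as follows. First, use Lemma~\ref{lem:heegner-in-selmer} and the $\mathcal{G}_n$-invariance to identify $U_n$ with a subgroup of $\fA_{n+1}^{\mathcal{G}_n}$. Then observe that $\fA_{n+1} = \A_\infty^{\Gamma_{n+1}}[\varpi]$ consists of classes that are $\varpi$-divisible in $\A_\infty^{\Gamma_{n+1}}$, hence in $\Sel(K_{n+1}, A)$, since every element of the $\varpi$-torsion of a module of the form $\Hc_\infty^\wedge$ is divisible (as $\Hc_\infty$ is free over $\Lambda$, its Pontryagin dual is $\Lambda$-divisible, in particular $\varpi$-divisible); this shows $\fA_{n+1}$, and a fortiori $U_n$, maps to zero in $\Sh(K_{n+1}, A_1)$. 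Finally combine this with the injectivity of $\Sh(K_n, A_1) \hookrightarrow \Sh(K_{n+1}, A_1)$ furnished by hypothesis \textbf{(Sha)} to conclude that the image of $U_n$ in $\Sh(K_n, A_1)$ is zero, i.e.\ $U_n \subseteq \D(K_n, A_1)$.

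I expect the main obstacle to be the careful bookkeeping around the two limits (in $m$ and in $n$) and the identifications via Theorem~\ref{thm:control}: one must check that the various restriction and multiplication-by-$\varpi$ maps are compatible enough that ``divisibility at level $\A_\infty$'' really descends to membership in the concrete divisible subgroup $\D(K_{n+1}, A_1)$, rather than something weaker. This is precisely the delicate step in \cite[Lemma 5.6]{HLV} (or its analogue), and the cleanest route is likely to cite that argument, noting as in Lemma~\ref{lem:free-submodule-U} that one simply suppresses the $\pm$ decorations and replaces $\fE$ by $\fA$, while pointing out that our Theorem~\ref{thm:control} supplies the control-theoretic inputs used there.
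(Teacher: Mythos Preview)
Your approach is correct and essentially coincides with the paper's. The paper packages the argument as a snake-lemma diagram---the middle vertical map on Selmer groups is an isomorphism by Theorem~\ref{thm:control}, the right vertical map on $\Sh$ is injective by hypothesis, hence $\D(K_n,A_1)\cong \D(K_{n+1},A_1)^{\mathcal{G}_n}$, and then $U_n\subset\fA_{n+1}^{\mathcal{G}_n}$ lands there---which is logically the same as your direct chase of images in $\Sh$. One small point: both arguments implicitly use $\fA_{n+1}\subset \D(K_{n+1},A_1)$, and your justification via divisibility is the right idea, but note that what you actually need is that $\A_\infty^{\Gamma_{n+1}}$ (not just $\A_\infty$) is $\varpi$-divisible; this follows because $\Hc_\infty\cong\Lambda$ gives $\A_\infty^{\Gamma_{n+1}}\cong(\cO[G_{n+1}])^\wedge$, which is $\cO$-divisible.
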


\begin{proof}
Consider the commutative diagram with exact rows
\[
\begin{CD}
0 @>>> \D(K_n,A_1)   @>>> \Sel(K_n,A_1) @>>>\Sh(K_n,A_1) @>>> 0\\
@.    @VVV                   @VVV @VVV @.\\
0 @>>> \D(K_{n+1},A_1)^{\mathcal{G}_n}   @>>> \Sel(K_{n+1},A_1)^{\mathcal{G}_n} @>>>\Sh(K_{n+1},A_1)^{\mathcal{G}_n}
\end{CD}
\]
By Theorem \ref{thm:control}, the middle vertical map is an isomorphism, while the kernel of the right vertical map is $0$ by assumption, so by the snake lemma the left vertical map is also an isomorphism, so we have
\begin{equation}\label{eq:divisible-submodules}
\D(K_n,A_1) \simeq \D(K_{n+1},A_1)^{\mathcal{G}_n}.
\end{equation}
But $U_n$ is defined as a submodule of $\fA_{n+1}^{\mathcal{G}_n}$, so by Lemma \ref{lem:free-submodule-U}, $U_n$ is contained in the right-hand side of \eqref{eq:divisible-submodules}, hence also the left-hand side, which completes the proof.
\end{proof}

The utility of this last proposition comes from our ability to describe $\Sel(K,T)$ in terms of the modules $\D(K_n,A_m)$.

\begin{proposition}\label{prop:sel-is-tate}
We have an identification
\[
\Sel(K,T) = \varprojlim_m \D(K,A_m).
\]
\end{proposition}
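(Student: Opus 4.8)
The plan is to exploit the short exact sequences
\[
0 \rightarrow \D(K,A_m) \rightarrow \Sel(K,A_m) \rightarrow \Sh(K,A_m) \rightarrow 0
\]
recalled just above the statement, together with the definition $\Sel(K,T) = \varprojlim_m \Sel(K,A_m)$, where the transition maps are induced by multiplication by $\varpi$ on the coefficients. The natural inclusions $\D(K,A_m) \hookrightarrow \Sel(K,A_m)$ are compatible with these transition maps, so they induce an injection $\varprojlim_m \D(K,A_m) \hookrightarrow \varprojlim_m \Sel(K,A_m) = \Sel(K,T)$, and the whole content of the proposition is that this injection is also surjective. Equivalently, since $\varprojlim$ is left exact, it suffices to show $\varprojlim_m \Sh(K,A_m) = 0$.

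To see this, I would first recall that $\Sel(K,A) \subseteq H^1_\Sigma(K,A)$ is a cofinitely generated $\cO$-module, so its maximal divisible subgroup $\D(K,A)$ has finite index; that is, $\Sh(K,A)$ is finite, hence annihilated by $\varpi^N$ for some $N \geq 0$. By the normalization above, $\Sh(K,A_m) = \Sh(K,A)[\varpi^m]$ and the transition map $\Sh(K,A_{m+1}) \to \Sh(K,A_m)$ is multiplication by $\varpi$ on $\Sh(K,A)$; therefore the composite of any $N$ consecutive transition maps $\Sh(K,A_{m+N}) \to \Sh(K,A_m)$ is multiplication by $\varpi^N$, which is zero. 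It follows that $\varprojlim_m \Sh(K,A_m) = 0$.

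One may then conclude either by invoking left-exactness of $\varprojlim$ directly, or by chasing components: given $(x_m)_m \in \Sel(K,T)$, the images $\bar{x}_m \in \Sh(K,A_m)$ form a compatible system, so $\bar{x}_m = \varpi^N \bar{x}_{m+N} = 0$ for every $m$, whence each $x_m$ lies in $\D(K,A_m)$ and $(x_m)_m \in \varprojlim_m \D(K,A_m)$.

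I do not anticipate a genuine obstacle here; the argument is essentially formal once the control theorem (Theorem~\ref{thm:control}) is in place. The only points demanding a little care are the bookkeeping of the transition maps as multiplication by $\varpi$ — so that the control-theorem identification of the middle terms is applied consistently across the inverse system — and the finiteness of $\Sh(K,A)$, which one gets from the cofinite generation of $\Sel(K,A)$ over $\cO$.
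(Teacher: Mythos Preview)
Your proof is correct and follows essentially the same approach as the paper: both pass to the inverse limit of the short exact sequences $0 \to \D(K,A_m) \to \Sel(K,A_m) \to \Sh(K,A_m) \to 0$ and use finiteness of $\Sh(K,A)$ to conclude that $\varprojlim_m \Sh(K,A_m)=0$. The only cosmetic difference is that the paper invokes finiteness of all terms (hence Mittag--Leffler) to get an exact sequence of inverse limits and then notes that $|\Sh(K,A_m)|$ is bounded, whereas you use only left-exactness of $\varprojlim$ and argue directly that the transition maps, being multiplication by $\varpi$, eventually compose to zero.
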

\begin{proof}
For each $m \geq 1$ we have, by definition, a short exact sequence
\[
0 \rightarrow \D(K,A_m) \rightarrow \Sel(K,T_m) \rightarrow \Sh(K,A_m) \rightarrow 0
\]
where, for the middle term, we have remembered that $T_m \simeq A_m$ and that their Selmer groups agree at finite level. Each of these groups is finite, so projective limits will preserve the exact sequence, and the order of $\Sh(K,A_m)$ is bounded independent of $m$, so taking $\varprojlim_m$ proves the claim.
\end{proof}

The following proposition completes the proof of Theorem \ref{thm:main} via Corollary \ref{cor:no-finite-findex-if-torsion-free}.

\begin{proposition} Assume $(f,K,p)$ is admissible in the sense of $\S\ref{sec:notation}$ and that hypothesis \textbf{(Sha)} holds. Then $\Sel(K,T)/US(K,T)$ is a torsion-free $\cO$-module.
\end{proposition}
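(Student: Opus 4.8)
The plan is to follow the template of \cite[Section~7]{Bertolini-Bordeaux} and \cite[Section~5]{HLV}, combining Proposition~\ref{prop:sel-is-tate} with the free Heegner submodules $U_n$ produced above. To begin, I would record two torsion-freeness facts. First, by Proposition~\ref{prop:sel-is-tate} the module $\Sel(K,T)=\varprojlim_m\D(K,A_m)$ is the $\varpi$-adic Tate module of the divisible group $\D(K,A)$, hence $\cO$-torsion-free. Second, since $\bar{\rho}_f$ is irreducible we have $H^0(K_n,A)=0$ (cf.\ the proof of Lemma~\ref{lem:pre-control}), so $H^1(K_n,T)$ is $\cO$-torsion-free, and therefore so is its submodule $\Sel(K_n,T)=\varprojlim_m\Sel(K_n,A_m)$. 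Granting these, $\Sel(K,T)/US(K,T)$ has nontrivial $\cO$-torsion only if there exists $s\in\Sel(K,T)\smallsetminus US(K,T)$ with $\varpi s\in US(K,T)$; so it suffices to rule this out, that is, to show that any such $s$ actually lies in $\mathrm{cores}_{K_n/K}\Sel(K_n,T)$ for every $n\geq1$. Using $\mathrm{cores}_{K_m/K}=\mathrm{cores}_{K_n/K}\circ\mathrm{cores}_{K_m/K_n}$ together with Proposition~\ref{prop:heegner-non-triv}, it is enough to treat the cofinal set of $n$ for which $\fA_n\neq0$.

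So fix such an $s$ and such an $n$, and choose $y\in\Sel(K_n,T)$ with $\mathrm{cores}_{K_n/K}(y)=\varpi s$. Reducing modulo $\varpi$ — and invoking Theorem~\ref{thm:control} to identify the finite-level Selmer groups throughout the tower — the image $\bar y\in\Sel(K_n,A_1)$ satisfies $\mathrm{cores}_{K_n/K}(\bar y)=0$ in $\Sel(K,A_1)$, because $\varpi s$ has trivial reduction there. The heart of the proof is to show that $y$ may be adjusted by an element $y_0\in\ker\bigl(\mathrm{cores}_{K_n/K}\colon\Sel(K_n,T)\to\Sel(K,T)\bigr)$ so that $y-y_0\in\varpi\,\Sel(K_n,T)$. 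Once this is done, write $y-y_0=\varpi y'$ with $y'\in\Sel(K_n,T)$; then $\varpi s=\mathrm{cores}_{K_n/K}(y-y_0)=\varpi\,\mathrm{cores}_{K_n/K}(y')$, and dividing by $\varpi$ (using that $\Sel(K,T)$ is $\cO$-torsion-free) gives $s=\mathrm{cores}_{K_n/K}(y')$. Letting $n$ vary over the cofinal set above then yields $s\in US(K,T)$, the sought contradiction.

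The ingredient that forces this adjustment to exist — and this is the one place where hypothesis \textbf{(Sha)} is genuinely used — is the free $R_n$-submodule $U_n\subseteq\D(K_n,A_1)\subseteq\Sel(K_n,A_1)$ supplied by Lemma~\ref{lem:free-submodule-U} and Proposition~\ref{prop:un-in-dn}. Because $U_n$ is free over $R_n=(\cO/\varpi\cO)[G_n]$, and because \eqref{eq:divisible-submodules} (which itself requires \textbf{(Sha)}) matches up the divisible submodules $\D(K_n,A_1)$ along the tower, one checks exactly as in \cite[Section~7]{Bertolini-Bordeaux} and \cite[Section~5]{HLV} that every class in $\ker\bigl(\mathrm{cores}_{K_n/K}\colon\Sel(K_n,A_1)\to\Sel(K,A_1)\bigr)$ is the reduction of a $T$-valued Selmer class killed by corestriction; applied to $\bar y$ this produces the required $y_0$. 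At the same point one uses the three-term relation \eqref{eq:three-term-relation} to pass from the Heegner classes $z_n$ to a genuinely norm-compatible family — which is where the admissibility condition $a_p(f)^2\not\equiv1\bmod p$ is needed in weight $k=2$ — ensuring that $US(K,T)$ is large enough for the bookkeeping over $n$ to be consistent.

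I expect this last paragraph to contain the only real difficulty: all of the reductions above are formal or already in place, but the diagram chase reconciling the $A_1$-level free module $U_n$ with the $T$-level kernel of corestriction, while keeping the divisible submodules $\D(K_n,A_1)$ under control, is delicate, and it breaks down without \textbf{(Sha)}, since then one has only $U_n\subseteq\Sel(K_n,A_1)$ rather than the crucial containment $U_n\subseteq\D(K_n,A_1)$ of Proposition~\ref{prop:un-in-dn}.
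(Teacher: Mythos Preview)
Your argument rests on the claim that every class $\bar y\in\ker\bigl(\cor_{K_n/K}\colon\Sel(K_n,A_1)\to\Sel(K,A_1)\bigr)$ lifts to some $y_0\in\ker\bigl(\cor_{K_n/K}\colon\Sel(K_n,T)\to\Sel(K,T)\bigr)$. This is where the argument breaks: the claim is essentially equivalent to what you are trying to prove. Apply the snake lemma to
\[
\begin{CD}
0 @>>> \Sel(K_n,T) @>{\varpi}>> \Sel(K_n,T) @>>> \D(K_n,A_1) @>>> 0\\
@. @VV{\cor}V @VV{\cor}V @VV{\cor}V @.\\
0 @>>> \Sel(K,T) @>{\varpi}>> \Sel(K,T) @>>> \D(K,A_1) @>>> 0
\end{CD}
\]
(the rows are exact because $\Sel(K_n,T)$ is the Tate module of the divisible group $\D(K_n,A)$). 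The obstruction to lifting $\bar y$ is its image under the connecting map into $\bigl(\Sel(K,T)/\cor\,\Sel(K_n,T)\bigr)[\varpi]$, and unwinding the definition with your chosen $y$ one finds that this image is exactly the class of $s$ modulo $\cor\,\Sel(K_n,T)$. So the lifting succeeds if and only if $s\in\cor\,\Sel(K_n,T)$, which is precisely the conclusion you wanted; more globally, your lifting claim is equivalent to $\Sel(K,T)/\cor\,\Sel(K_n,T)$ being $\cO$-torsion-free for each $n$. Neither \cite[\S7]{Bertolini-Bordeaux} nor \cite[\S5]{HLV} contains such a lifting statement, and the mere existence of the free submodule $U_n\subset\D(K_n,A_1)$ gives no control over an \emph{arbitrary} $\bar y$ in that kernel.

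The paper's proof avoids this circularity by running the argument in the opposite direction. It first inputs the rank information of \cite{howard,LongoVigni}: $\Sel(K,T)$ is $\cO$-free of rank one, and $\rank_{\cO}US(K,T)=\rank_\Lambda\hat S(K_\infty,T)=\rank_\Lambda\X(f)=1$. Hence the quotient is torsion-free as soon as $US(K,T)$ contains \emph{one} element not divisible by $\varpi$. Such an element is manufactured directly from the $U_n$: lift a generator $u_n$ of $U_n\subset\D(K_n,A_1)=T\D_n/\varpi$ to $v_n\in T\D_n=\Sel(K_n,T)$; freeness of $U_n$ over $R_n$ forces $\cor_{K_n/K}(v_n)\not\equiv 0\pmod\varpi$; then a compactness argument extracts a limit point lying in $US(K,T)$ and still not divisible by $\varpi$. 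Your proposal never invokes the rank-one input, and without it (or a substitute) there is no way to close the loop.
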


\begin{proof}
We begin by noting that $\Sel(K,T)$ is a torsion-free $\Lambda$-module of rank $1$; this is \cite[Proposition 3.4.3]{howard} when $k=2$ and \cite[Theorem 3.5]{LongoVigni} when $k \geq 4$.

The same theorems also tell us that $\mathrm{rank}_\Lambda \X(f)=1$, so arguing as in \cite[$\S$3.2]{Bertolini-Compositio}, we have
\begin{equation}\label{eq:common-rank}
\mathrm{rank}_\cO US(K_\infty, T) = \rank_\Lambda \hat{S}(K_\infty, T) = \rank_\Lambda \X(f) = 1.
\end{equation}
Thus, if we can show that $US(K,T)$ contains a nontrivial element of $\Sel(K,T)$ not divisible by $\varpi$, then $US(K,T) \simeq \cO$ and $\Sel(K,T)/US(K,T)$ is a torsion-free $\cO$-module as desired.

Let us write $T\D_n = \varprojlim_m \D(K_n,A_m)$ for the Tate module of $\D(K_n,A)$. Then by Proposition \ref{prop:un-in-dn}, we have an inclusion
\[
U_n \subset \D(K_n,A_1) \simeq T\D_n / \varpi T\D_n.
\]
Recall that $U_n$ is a free $R_n$-module. Let $\tilde U_n$ be a free $\Zp[G_n]$-submodule of $T\D_n$ of rank one lifting $U_n$ modulo $\varpi$, generated by an element $v_n$. Then the fact that $\tilde U_n$ is free implies that $\cor_{K_n/K}(v_n)$ is not divisible by $\varpi$. On the other hand, Lemma~\ref{prop:sel-is-tate} tells us that
\[
\cor_{K_n/K}(v_n)\in T\D_0=US(K,T).
\]
By compactness, we may find a subsequence $\bigl(\cor_{K_n/K}(v_{n_i})\bigr)_{i\ge 0}$ converging to an element of  $\Sel(K,T)$ that lies in $US(K,T)$ and is not divisible by $\varpi$, as required.
\end{proof}
\bibliographystyle{amsalpha}
\bibliography{references}
\end{document}